\documentclass[10pt, reqno]{amsart}
\usepackage[utf8]{inputenc}
\usepackage[english]{babel}
\usepackage{newunicodechar}
\newunicodechar{́}{\'{}}

\usepackage{amsmath,amssymb,amsfonts}
\usepackage{amsthm}
\usepackage{float}
\usepackage{url}
\usepackage[hidelinks]{hyperref}
\usepackage{MnSymbol} 
\usepackage{booktabs}
    
\theoremstyle{plain}
\newtheorem{theorem}{Theorem}[section]
\newtheorem{prop}[theorem]{Proposition}
\newtheorem{lemma}[theorem]{Lemma}
\newtheorem{corollary}[theorem]{Corollary}
\newtheorem{construction}[theorem]{Construction}

\theoremstyle{definition}
\newtheorem{definition}[theorem]{Definition}

\newtheorem{remark}[theorem]{Remark}

\usepackage{mathtools}
\usepackage{enumitem}
\usepackage{graphicx}
\usepackage{cleveref}
\usepackage[mathscr]{euscript}
\usepackage{tikz}

\title[Multiplicatively idempotent semirings]{Congruence-simple multiplicatively idempotent semirings and multiplicative divisibility}

\author[D. Siejwa]{Damian Siejwa}
\address{Faculty of Mathematics and Information Science\\
Warsaw University of Technology\\
00-661 Warsaw, Poland}

\email{damian.siejwa.dokt@pw.edu.pl}

\subjclass[2020]{Primary 16Y60; Secondary 08A30, 20M17}
\keywords{Congruence-simple semiring, multiplicative idempotence, multiplicative divisibility, regular band, cancellativity}

\begin{document}
\begin{abstract}
    We resolve two conjectures concerning multiplicatively idempotent semirings. First, we prove that every congruence-simple multiplicatively idempotent semiring is finite, and hence belongs to the finite classification of Kepka, Korbelář and Landsmann. Second, we prove that every finitely generated commutative multiplicatively divisible semiring is multiplicatively idempotent. 
\end{abstract}
\maketitle
\section{Introduction}
Semirings provide a natural framework for algebraic structures in which addition need not admit inverses. They arise both in pure algebra and in applications, for instance in automata theory \cite{KuichSalomaa1986, KOSTOLANYI2021101}, graph algorithms \cite{GondranMinoux2008} and dynamic programming \cite{BARIL2026243}. From the structural point of view, one of the fundamental problems is to understand semirings satisfying appropriate simplicity conditions. 

The structure of congruence-simple semirings has been studied in a number of settings. Congruence-simple commutative semirings were classified by El Bashir, Hurt, Jančař\'{ı}k, and Kepka \cite{ELBASHIR2001277}. El Bashir and Kepka later proved structural results for congruence-simple semirings, showing in particular that such semirings fall into three classes: additively idempotent semirings, additively cancellative semirings, additively-nil semirings of index $2$ \cite{ElBashirKepka2007}. In the finite case, Monico proved that congruence-simple semirings belong to one of five types: semirings of order $2$, zero-multiplication rings of prime order, matrix rings over finite fields, additively idempotent semirings, and semirings with trivial addition \cite{MONICO2004846}. The present paper concerns a different restriction on the multiplicative structure, namely multiplicative idempotence. 

Kepka, Korbelář and Landsmann studied congruence-simple multiplicatively idempotent semirings in \cite{KepkaKorbelarLandsmann}. They proved that every such semiring with at least three elements is additively idempotent as well, and hence bi-idempotent. They also classified the finite congruence-simple multiplicatively idempotent semirings, showing that every such semiring is isomorphic to one of the eight two/three-element semirings $\mathbb{S}_1, \ldots, \mathbb{S}_8$ \cite[Theorem 5.6]{KepkaKorbelarLandsmann}. They then asked whether the finite assumption is necessary and conjectured that every congruence-simple multiplicatively idempotent semiring is finite. The reduction to the bi-idempotent case is useful for our approach. By a theorem of Pastijn and Zhao \cite{PastijnZhao}, the multiplicative reduct of every bi-idempotent semiring is a regular band. We use this fact as the starting point for our proof of the conjecture (see Theorem \ref{classification_of_multiplicatively_idempotent_simple_semirings}). 

Kepka, Korbelář and Landsmann also considered multiplicative divisibility. Every multiplicatively idempotent semiring is multiplicatively divisible, and they conjectured that the converse holds for finitely generated commutative semirings. Our second main result proves this conjecture (see Theorem \ref{solution_to_conjecture_2}). The proof first treats the additively cancellative and multiplicatively cancellative cases. For the general case, we pass to a minimal non-idempotent quotient and analyze its multiplicatively cancellative and non-cancellative elements. 

\medskip 
\noindent\textbf{Plan of the paper.} Section~\ref{section_background} contains the necessary preliminaries on semigroups, bands, semirings, semilattices and lattice-ordered groups. In Section~\ref{section_classification_of_simple_multiplicatively_idempotent_semirings} we prove that every congruence-simple multiplicatively idempotent semiring is finite and complete the classification of such semirings. Section~\ref{section_multiplicatively_divisible_commutative_semirings} is devoted to finitely generated commutative multiplicatively divisible semirings. We first consider the additively cancellative and multiplicatively cancellative cases, and then reduce the general case to a minimal non-idempotent quotient. The section concludes with the proof that every finitely generated commutative multiplicatively divisible semiring is multiplicatively idempotent. 
\section{Background}\label{section_background}
In this section we recall basic notions and facts that will be used throughout the paper. Readers familiar with these topics may skip to the next section. 
\subsection{Semigroups and bands} A semigroup $S$ is 
\begin{itemize} 
    \item \emph{cancellative} if both $ax = ay$ and $xa = ya$ imply $x = y$ for all $a, x, y \in S$, 
    \item a \emph{left zero} (resp. \emph{right zero}) \emph{semigroup}  if $xy = x$ (resp. $xy = y$) for all $x, y\in S$, 
    \item a \emph{band} if $x^2 = x$ for every $x\in S$, 
    \item \emph{divisible} if for every $x \in S$ and every natural number $n \geqslant 1$ there exists $y \in S$ such that $y^n = x$. 
\end{itemize}
A band $S$ is a \emph{regular band} if $xyzx = xyxzx$ for all $x, y, z\in S$, and it is a \emph{rectangular band} if $xyx = x$ for all $x, y \in S$. We shall use the following standard characterization of rectangular bands. 
\begin{theorem}\cite[Theorem 1.1.3]{Howie}\label{equivalent_definitions_of_rectangular_band}
    Let $S$ be a semigroup. Then the following conditions are equivalent: 
    \begin{enumerate}[label=(\arabic*)]
        \item $S$ is a rectangular band, 
        \item every element of $S$ is idempotent and $xyz = xz$ for all $x,y,z\in S$, 
        \item there exist a left zero semigroup $L$ and a right zero semigroup $R$ such that $S \cong L \times R$,
        \item $S$ is isomorphic to a semigroup $I \times J$, where $I$ and $J$ are non-empty sets, and where multiplication is given by $(i, j)(k, l) = (i, l)$. 
    \end{enumerate}
\end{theorem}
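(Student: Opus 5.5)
I will prove the cycle of implications (1)$\Rightarrow$(2)$\Rightarrow$(4)$\Rightarrow$(3)$\Rightarrow$(1).

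\emph{(1)$\Rightarrow$(2).} Idempotence is part of the definition of a rectangular band, so only $xyz = xz$ needs proof. Write $xz = (xz)y(xz)$, which holds by rectangularity applied to the pair $xz, y$. Then
\[
xyz = (xzx)y(zxz) = x(zxyzx)z = x z,
\]
since $zxyzx = z(xy)z\cdot$ reduces to $z$ by the identity $z w z = z$ with $w = xy$ after regrouping. Concretely: $xyz = (xzx)y(zxz)$ uses $x = xzx$ and $z = zxz$. The middle factor is $zxyz$, and $z(xy)z = z$, so the product collapses to $xzxz$. This equals $(xz)^2 = xz$. The only care needed here is the bookkeeping of associativity.

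\emph{(2)$\Rightarrow$(4).} This is the main step, since it requires a construction. Fix an element $a\in S$ and set $I = Sa$ and $J = aS$. Define
\[
\varphi\colon S\to I\times J,\qquad \varphi(x) = (xa, ax).
\]
Give $I\times J$ the multiplication $(i,j)(k,l) = (i,l)$, and check the following.
\begin{itemize}
\item[(i)] $\varphi$ is a homomorphism. Indeed $\varphi(xy) = (xya, axy) = (xa, ay)$ by (2), which is $\varphi(x)\varphi(y)$.
\item[(ii)] $\varphi$ is injective. If $xa = ya$ and $ax = ay$, then $x = x^3 = xax = (xa)(ax)$, and likewise $y = (ya)(ay)$. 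Hence $x = y$. Here $x^3 = x$ by idempotence, and $xxx = xax$ by (2) with middle element replaced.
\item[(iii)] $\varphi$ is surjective. Given $(ua, av)$, set $x = uav$. Then $xa = uava = ua$ and $ax = auav = av$, again by (2).
\end{itemize}
Both $I$ and $J$ are non-empty because they contain $a = a^2$.

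\emph{(4)$\Rightarrow$(3).} Take $L = I$ with the left zero multiplication and $R = J$ with the right zero multiplication. The direct product $L\times R$ then has exactly the multiplication $(i,j)(k,l) = (i,l)$.

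\emph{(3)$\Rightarrow$(1).} This is a direct computation. For $(i,j),(k,l)\in L\times R$ we have $(i,j)(k,l)(i,j) = (ik\,i,\; jlj) = (i,j)$, and $(i,j)^2 = (i,j)$. Both properties transfer along the isomorphism.

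I expect (2)$\Rightarrow$(4) to be the only step requiring an idea, namely the choice of $\varphi$. Its key point is that every element factors as $x = (xa)(ax)$ through the fixed $a$; the remaining steps are routine verification.
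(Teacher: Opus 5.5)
The paper gives no proof of this statement and only cites Howie. Your cycle (1)$\Rightarrow$(2)$\Rightarrow$(4)$\Rightarrow$(3)$\Rightarrow$(1) is correct and is essentially the standard textbook argument: fix $a$, map $x\mapsto(xa,ax)$, use $x=(xa)(ax)$ for injectivity and $x=uav$ for surjectivity.

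There is one small slip in (1)$\Rightarrow$(2). The claim that $zxyzx$ reduces to $z$ is false, since it equals $zx$. The opening identity $xz=(xz)y(xz)$ is also never used. Your ``concretely'' regrouping $x(zxyz)xz=xzxz=xz$ is correct, however, so the conclusion stands.
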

\subsection{Semirings} A \emph{semiring} is a non-empty set $R$ with two binary operations $+: R \times R \to R$ and $\cdot: R \times R \to R$ (called \emph{addition} and \emph{multiplication}) such that $(R, +)$ is a commutative semigroup, $(R, \cdot)$ is a semigroup and $$r \cdot (s + t) = r \cdot s + r \cdot t, \quad (r + s)\cdot t = r \cdot t + s \cdot t$$ for all $r, s, t \in R$. For notational convenience, we write $rs$ instead of $r \cdot s$ for $r, s \in R$. A~\emph{semiring congruence} on $R$ is an equivalence relation $\sim\: \subseteq R \times R$ such that $$r \sim s \implies r + t \sim s + t, \; tr \sim ts, \text{ and } rt \sim st$$ for all $r, s, t \in R.$ We denote by $\mathrm{Con}(R)$ the set of all congruences on $R$, ordered by inclusion. For a subset $A\subseteq R \times R$ we write $\langle A \rangle$ for the least congruence on $R$ containing $A$. If $\theta\in\mathrm{Con}(R)$, then $R/\theta$ denotes the corresponding quotient semiring. A semiring $R$ is 
\begin{itemize}
    \item \emph{trivial} if it has exactly one element, 
    \item \emph{commutative} if its multiplication is commutative, 
    \item \emph{additively} (resp. \emph{multiplicatively}) \emph{cancellative} if its additive (resp. multiplicative) reduct is cancellative, 
    \item \emph{congruence-simple} if it has exactly two congruences, namely the \emph{equality congruence} $\mathrm{id}_R$ and the \emph{universal congruence} $R \times R$, 
    \item \emph{additively} (resp. \emph{multiplicatively}) \emph{idempotent} if $r + r = r$ (resp. $r^2 = r$) for every $r \in R$, 
    \item \emph{bi-idempotent} if it is both additively and multiplicatively idempotent, 
    \item a \emph{parasemifield} if its multiplicative reduct is a group, 
    \item \emph{multiplicatively divisible} if its multiplicative reduct is a divisible semigroup.
\end{itemize}
An element $a \in R$ is \emph{left} (resp. \emph{right}) \emph{absorbing} if $ar = a$ (resp. $ra = a$) for every $r \in R$. We shall also use the following result of Pastijn and Zhao. 
\begin{theorem}\cite[Theorem 2.3]{PastijnZhao}\label{multiplicative_reduct_of_bi_idempotent_semiring_is_regular_band}
    Let $(R, +, \cdot)$ be a bi-idempotent semiring. Then the multiplicative reduct $(R, \cdot)$ is a regular band. 
\end{theorem}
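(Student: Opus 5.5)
The plan is to derive the identity $xyzx = xyxzx$ purely equationally. The only tools are additive idempotence, multiplicative idempotence, distributivity, and the fact that every sum $s$ satisfies $s^2 = s$. The basic device is to choose an auxiliary element $t$ built as a sum of products of $x,y,z$. We expand both $t$ and $t^2$ by distributivity, reduce each product using $u^2=u$, and obtain an additive identity of the form $t = t + (\text{extra terms})$. We then multiply this identity on the left and right by suitable words, such as $x$, $xy$ or $zx$, to isolate the two sides of the regularity identity.

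\textbf{Step 1: basic additive identities.} From $(x+y)^2 = x+y$ we get
\[
x+y = x+y+xy+yx .
\]
Multiplying on both sides by $x$ gives $x = x + xyx$, since $x\cdot x\cdot x = x$ and $x(x+y+xy+yx)x$ collapses to $x + xyx$. Hence $x \geqslant xyx$ in the natural order $a \leqslant b \iff a+b=b$ of the semilattice $(R,+)$. Multiplication is monotone for this order, because it distributes over $+$. Similar squarings, for instance $(xy+yx)^2$ or $(y+xzx)^2$, give further order relations between words. I expect the most useful one to be $xy + yx \geqslant xyx + yxy$.

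\textbf{Step 2: two inequalities.} The target is to show both
\[
xyzx \leqslant xyxzx \quad\text{and}\quad xyxzx \leqslant xyzx
\]
in this order; antisymmetry of the semilattice order then gives equality.
\begin{itemize}
\item For $xyxzx \leqslant xyzx$, take $t = y + xzx$. Computing $xy\,t\,zx$ and $xy\,t^2\,zx$ gives the relation
\[
xyzx + xyxzx = xyzx + xyxzx + xyxzxyzx,
\]
which is the kind of relation that must be massaged further.
\item I would also insert $x \geqslant xzx$, or $y \geqslant yxy$, in the middle of words using monotonicity. For example, $xyzx = xy\,z\,x$ and $z \geqslant zxz$ give $xyzx \geqslant xyzxzx$.
\item Symmetric manipulations then produce words that simplify back to $xyzx$ or $xyxzx$ after applying the band identity.
\end{itemize}

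\textbf{Main obstacle.} The hard part is finding the right auxiliary sums, so that the extra terms produced by squaring can be absorbed and the two-sided inequality closes up. A single squaring typically yields only a trivial identity, as with $(x + xyx)^2$, or one cluttered with long words like $xyxzxyzx$. My first attempt would be to combine $t=y+xzx$ with its mirror image $t' = z + xyx$. I would then use the local monoid $xRx$, in which $x$ acts as identity and which is again a bi-idempotent semiring, to reduce long words. Writing $a = xyx$ and $b = xzx$, the claim becomes that the element $xyzx$ equals $ab$ inside this local structure. I would try to show this by bounding $xyzx$ above and below by $ab$, using the order relations from Step 1.
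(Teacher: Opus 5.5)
\textbf{This is a search strategy, not a proof, and its starting inequality is false.} (The paper gives no proof of this theorem; it cites Pastijn--Zhao, so your argument has to stand on its own.)

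\emph{The false step.} Multiplying $x+y=x+y+xy+yx$ by $x$ on both sides does not give $x=x+xyx$. The left side becomes $x(x+y)x=x+xyx$, not $x$, so you only obtain the tautology $x+xyx=x+xyx$. Moreover, $x\geqslant xyx$ is not true in bi-idempotent semirings. In $\mathbb{S}_3$, with $x=a$ and $y=w$, you get $x+xyx=a+w=w\neq x$, so there $xyx\geqslant x$ holds instead. In $\mathbb{S}_2$ the same substitution gives your inequality. So no order relation between $x$ and $xyx$ holds in general. Consequently the moves ``insert $x\geqslant xzx$, $y\geqslant yxy$, $z\geqslant zxz$ by monotonicity'' are unavailable. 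Your sample $xyzx\geqslant xyzxzx$ is vacuous anyway, since $xyzxzx=xyzx$ in every band.

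\emph{The missing steps.} Even setting that aside, neither of your two inequalities is ever derived. The relation from $t=y+xzx$ is left to be ``massaged'', and the main obstacle is named but not overcome.

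\emph{How the strategy can be completed.} Your general plan (square auxiliary sums, multiply by words, use antisymmetry) does work, but with a different basic inequality. Square $eu+ve$ and multiply on the left by $eue$ and on the right by $eve$. Using only band identities, this gives $eueve=eueve+euve$, that is, $euve\leqslant eueve$ for all $e,u,v$. Then:
\begin{itemize}
\item With $(e,u,v)=(x,y,z)$ you get $xyzx\leqslant xyxzx$.
\item With $(e,u,v)=(y,x,z)$, multiplied by $x$ on the left and $zx$ on the right, you get $xyxzyzx\leqslant xyxyzyzx=xyzx$.
\item With $(e,u,v)=(z,y,x)$, multiplied by $xyx$ on the left and $x$ on the right, you get $xyxzx=x(yxz)^2x\leqslant xyxzyzxzx=xyxzyzx$.
\end{itemize}
Chaining these gives $xyxzx\leqslant xyxzyzx\leqslant xyzx\leqslant xyxzx$, so antisymmetry of the semilattice order yields $xyzx=xyxzx$. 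These two ``sandwiched'' instances of $euve\leqslant eueve$ are the idea your sketch lacks.
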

\subsection{Semilattices and lattice-ordered groups}
A \emph{semilattice} is a partially ordered set $L$ in which every two elements $x,y\in L$ have a supremum, denoted by $x \lor y$. A \emph{lattice} is a partially ordered set in which every two elements have both a supremum $x\lor y$ and an infimum $x\land y$. 

If $R$ is an additively idempotent semiring, then addition induces a partial order $\leqslant\: \subseteq R \times R$ by $r \leqslant s \iff r + s = s$. With respect to this order, $r \lor s = r + s$, so the additive reduct of $R$ may be regarded as a semilattice. 

An \emph{$\ell$-group} is a group $G$ equipped with a lattice order $\leqslant\subseteq G \times G$ such that $$x \leqslant y \implies axb \leqslant ayb$$ for all $x, y, a, b \in G$. 
\begin{lemma}\label{divisible_abelian_finitely_generated_l_group_is_trivial}
    Let $G$ be a divisible abelian $\ell$-group. If $G$ is finitely generated as an $\ell$-group, then $G$ is trivial.
\end{lemma}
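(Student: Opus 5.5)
The plan is to reduce to a totally ordered, archimedean quotient of $G$, where lattice operations no longer create new elements, and then use the fact that a finitely generated divisible abelian group is trivial. Assume $G$ is nontrivial and generated as an $\ell$-group by $g_1,\dots,g_n$. Write $G$ additively and use $|x| = x \lor (-x)$.

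\textbf{Step 1: a strong order unit.} Put $u = |g_1| + \dots + |g_n|$. Let $H$ be the set of $x \in G$ with $|x| \leqslant ku$ for some $k \in \mathbb{N}$. Each $g_i$ lies in $H$. Using $|x \pm y| \leqslant |x| + |y|$, $|x \lor y| \leqslant |x| + |y|$ and $|x \land y| \leqslant |x| + |y|$, the set $H$ is closed under the group and lattice operations. Hence $H = G$, so $u$ is a strong order unit. Since $G \neq 0$, some $g_i \neq 0$, and therefore $u > 0$.

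\textbf{Step 2: a maximal $\ell$-ideal avoiding $u$.} Consider the $\ell$-ideals of $G$, that is, the convex $\ell$-subgroups (all normal, since $G$ is abelian), that do not contain $u$. The union of a chain of such ideals is again an $\ell$-ideal and still omits $u$. By Zorn's lemma there is a maximal one, $M$. Because $u$ is a strong unit, any $\ell$-ideal containing $u$ is all of $G$, so $M$ is a maximal proper $\ell$-ideal. Consequently $G/M$ has no $\ell$-ideals other than $0$ and itself, and it is nontrivial because $u \notin M$.

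\textbf{Step 3: the quotient is totally ordered and archimedean.} This is where I expect the real work. First, $M$ is prime. Suppose $a \land b = 0$ with $a, b \geqslant 0$ and $a, b \notin M$. By maximality, the ideals generated by $M \cup \{a\}$ and by $M \cup \{b\}$ both contain $u$. So $u \leqslant m_1 + ka$ and $u \leqslant m_2 + kb$ for some $0 \leqslant m_i \in M$ and some $k$. Then
\[
u \leqslant (m_1 + m_2 + ka) \land (m_1 + m_2 + kb) = m_1 + m_2 + k(a \land b) = m_1 + m_2 .
\]
Here we use translation invariance of $\land$ and the abelian identity $ka \land kb = k(a \land b) = 0$ for $a, b \geqslant 0$. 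This gives $u \in M$, a contradiction. Hence $G/M$ is totally ordered.

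Next, $G/M$ is archimedean. Take $0 < x \in G/M$. The convex subgroup generated by $x$ is a nonzero $\ell$-ideal, so it is all of $G/M$. It therefore contains every $y$, which means $y \leqslant kx$ for some $k$. By Hölder's theorem, $G/M$ embeds as an ordered group into $(\mathbb{R}, +)$.

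\textbf{Step 4: conclusion.} The quotient $G/M$ is divisible, as a homomorphic image of $G$. It is generated as an $\ell$-group by the images $\bar g_1, \dots, \bar g_n$. Inside the chain $\mathbb{R}$, $\lor$ and $\land$ are $\max$ and $\min$, so they introduce no new elements. Hence the $\ell$-subgroup generated by the $\bar g_i$ equals the subgroup they generate. Thus $G/M$ is a finitely generated, torsion-free, divisible abelian group, so it is isomorphic to $\mathbb{Z}^r$. Since $\mathbb{Z}^r$ is not divisible for $r \geqslant 1$, we get $r = 0$ and $G/M = 0$. This contradicts $u \notin M$, so $G$ is trivial.
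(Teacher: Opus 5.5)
Your proof is correct, but it takes a different route from the paper. The paper cites Glass and Marra's theorem that the underlying group of a finitely generated abelian $\ell$-group is free abelian. It then finishes with a parity argument: a basis element $b$ cannot equal $2x$, because every coefficient of $2x$ is even.

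You avoid that structural theorem entirely. You pass to a quotient $G/M$ by an $\ell$-ideal that is maximal with respect to omitting $u$. You show $M$ is prime, so $G/M$ is a nonzero totally ordered group. In a chain $\lor$ and $\land$ are $\max$ and $\min$, so the $\ell$-subgroup generated by the images of the $g_i$ is just the subgroup they generate. Hence $G/M$ is a finitely generated abelian group, and divisibility then forces it to be trivial.

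What each approach buys: yours is self-contained, using only Zorn's lemma and standard $\ell$-group facts. The paper's is two lines long but rests on a nontrivial external result. In exchange it gets the much stronger fact that $G$ itself is free abelian, whereas you only obtain a nonzero finitely generated quotient.

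Part of your machinery is dispensable. The archimedean property and Hölder's theorem are never needed, since every finitely generated divisible abelian group is trivial, torsion or not. The strong unit of Step 1 is also unnecessary. Your Step 3 computation already shows that an $\ell$-ideal maximal with respect to omitting any fixed $g > 0$ is prime. So you can take $g = |g_i|$ for some $g_i \neq 0$ and skip straight to a nonzero totally ordered quotient.
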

\begin{proof}
    By \cite[Corollary 1]{GlassMarra}, the underlying abelian group of $G$ is free. Suppose that $G$ is non-trivial. Let $B$ be a basis of $G$. Fix $b \in B$. Since $G$ is divisible, there exists $x\in G$ such that $2x = b$. We may write $x = n_1b_1 + \ldots + n_kb_k$, where $n_1, \ldots, n_k \in\mathbb{Z}$ and $b_1, \ldots, b_k\in B$ are pairwise distinct. Then $b = 2n_1b_1 + \ldots + 2n_kb_k$. By uniqueness of the representation of an element of a free abelian group with respect to the basis $B$, the coefficient of $b$ on the right-hand side must be $1$. However, every coefficient on the right-hand side is even, a contradiction. Therefore, $G$ is trivial. 
\end{proof}
\section{Classification of congruence-simple multiplicatively idempotent semirings}\label{section_classification_of_simple_multiplicatively_idempotent_semirings}
In this section we complete the classification of congruence-simple multiplicatively idempotent semirings. In \cite{KepkaKorbelarLandsmann} Kepka, Korbelář and Landsmann listed eight semirings $\mathbb{S}_1, \ldots, \mathbb{S}_8$, whose operation tables are as follows. 
$$
\begin{array}{c@{\qquad\qquad}c}
\mathbb{S}_1:
\quad
\begin{array}{c|cc}
+ & w & a \\ \hline
w & w & a \\
a & a & w
\end{array}
\quad
\begin{array}{c|cc}
\cdot & w & a \\ \hline
w & w & w \\
a & w & a
\end{array}
&
\mathbb{S}_2:
\quad
\begin{array}{c|cc}
+ & w & a \\ \hline
w & w & a \\
a & a & a
\end{array}
\quad
\begin{array}{c|cc}
\cdot & w & a \\ \hline
w & w & w \\
a & w & a
\end{array}
\end{array}
$$

$$
\begin{array}{c@{\qquad\qquad}c}
\mathbb{S}_3:
\quad
\begin{array}{c|cc}
+ & w & a \\ \hline
w & w & w \\
a & w & a
\end{array}
\quad
\begin{array}{c|cc}
\cdot & w & a \\ \hline
w & w & w \\
a & w & a
\end{array}
&
\mathbb{S}_4:
\quad
\begin{array}{c|cc}
+ & w & a \\ \hline
w & w & w \\
a & w & w
\end{array}
\quad
\begin{array}{c|cc}
\cdot & w & a \\ \hline
w & w & w \\
a & w & a
\end{array}
\end{array}
$$

$$
\begin{array}{c@{\qquad\qquad}c}
\mathbb{S}_5:
\quad
\begin{array}{c|cc}
+ & a & w \\ \hline
a & a & w \\
w & w & w
\end{array}
\quad
\begin{array}{c|cc}
\cdot & a & w \\ \hline
a & a & a \\
w & w & w
\end{array}
&
\mathbb{S}_6:
\quad
\begin{array}{c|cc}
+ & a & w \\ \hline
a & a & w \\
w & w & w
\end{array}
\quad
\begin{array}{c|cc}
\cdot & a & w \\ \hline
a & a & w \\
w & a & w
\end{array}
\end{array}
$$

$$
\mathbb{S}_7:
\quad
\begin{array}{c|ccc}
+ & a & b & w \\ \hline
a & a & b & w \\
b & b & b & w \\
w & w & w & w
\end{array}
\quad
\begin{array}{c|ccc}
\cdot & a & b & w \\ \hline
a & a & a & a \\
b & a & b & w \\
w & w & w & w
\end{array}
$$

$$
\mathbb{S}_8:
\quad
\begin{array}{c|ccc}
+ & a & b & w \\ \hline
a & a & b & w \\
b & b & b & w \\
w & w & w & w
\end{array}
\quad
\begin{array}{c|ccc}
\cdot & a & b & w \\ \hline
a & a & a & w \\
b & a & b & w \\
w & a & w & w
\end{array}
$$
Then they proved the following theorem. 
\begin{theorem}\cite[Theorem 5.6]{KepkaKorbelarLandsmann}\label{classification_of_finite_multiplicatively_idempotent_simple_semirings} 
    Let $R$ be a finite multiplicatively idempotent congruence-simple semiring. Then $R$ is isomorphic to one of the eight two/three-element semirings $\mathbb{S}_1, \ldots, \mathbb{S}_8$. 
\end{theorem}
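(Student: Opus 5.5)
This statement is a cited result from \cite{KepkaKorbelarLandsmann}. To prove it directly, I would reduce to small cases using the structure of congruences, so that $R$ is forced to have at most three elements. After that, a finite check of the possible operation tables should finish the argument.

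\emph{Step 1: the two-element case.} On a two-element set every equivalence relation is either $\mathrm{id}_R$ or $R\times R$, so every two-element semiring is congruence-simple. It therefore suffices to enumerate all semirings on $\{w,a\}$ whose multiplication is idempotent. The multiplicative band on two elements is, up to isomorphism, a semilattice, a left zero semigroup, or a right zero semigroup. The additive reduct is one of the commutative semigroups of order two: the group $\mathbb{Z}_2$, a semilattice, or the null semigroup. I would check distributivity for each combination and identify the survivors with $\mathbb{S}_1,\dots,\mathbb{S}_6$ (the left and right zero versions are anti-isomorphic; I assume the authors list the survivors up to the appropriate identification).

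\emph{Step 2: $|R|\geqslant 3$.} Here I would use the result of \cite{KepkaKorbelarLandsmann} that $R$ is additively idempotent. By Theorem \ref{multiplicative_reduct_of_bi_idempotent_semiring_is_regular_band}, $(R,\cdot)$ is then a regular band. Consider Green's relation $\mathcal{D}$ on this band; it is the least semilattice congruence, and its classes are rectangular bands by Theorem \ref{equivalent_definitions_of_rectangular_band}. The key step is to show that $\mathcal{D}$, or a suitable refinement such as $\mathcal{L}$ or $\mathcal{R}$ (which are congruences on regular bands), is compatible with addition. Distributivity together with the identity $x+y=(x+y)^2$ gives relations like $xyx + yxy \leqslant x+y$, which I would try to exploit for this. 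Congruence-simplicity then forces $\mathcal{D}$ to be either the equality or the universal relation. Hence $(R,\cdot)$ is either a semilattice or a rectangular band.

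\emph{Step 3: the two branches.} If $(R,\cdot)$ is a rectangular band $I\times J$ in which both $I$ and $J$ are nontrivial, then the projection kernels should be semiring congruences, which contradicts simplicity. So $R$ is a left or right zero band, and the additive semilattice must be forced down to two elements. In the semilattice case, $R$ is a distributive bisemilattice, and I would use the known fact that the simple distributive bisemilattices have at most three elements. In the remaining mixed case, a semilattice of rectangular bands, the sets $\{x : x\leqslant a\}$, or sets built from an absorbing element $w$, give ideal-type congruences (Rees-like congruences, which must be checked to respect addition). These should force $|R|\leqslant 3$, and the three-element tables then reduce to $\mathbb{S}_7$ and $\mathbb{S}_8$.

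The main obstacle is Step 2: proving that the Green relations of the multiplicative regular band, or some relation naturally derived from them, are compatible with addition. Since finiteness is assumed here, I would first try an inductive approach. I would pick a minimal element in the $\mathcal{D}$-order and show that the equivalence collapsing the ideal it generates is a congruence.
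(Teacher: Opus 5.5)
\textbf{Verdict: genuine gap.} Step 1 is fine. The only correction there is that $\mathbb{S}_5$ and $\mathbb{S}_6$ are the left- and right-zero versions listed separately, so the list is up to isomorphism, not anti-isomorphism. The gap is in Step 2, and the claim there is not merely unproved but false. In $\mathbb{S}_7$ the element $b$ is the multiplicative identity and $\{a,w\}$ is a left zero band. So the multiplicative $\mathcal{D}$-classes are $\{b\}$ and $\{a,w\}$, and $\mathcal{L}=\mathcal{D}$. Now $a\,\mathcal{D}\,w$, yet $a+b=b$ and $w+b=w$ are not $\mathcal{D}$-related. In $\mathbb{S}_8$ the same failure occurs with $\mathcal{R}$. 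Hence none of $\mathcal{D},\mathcal{L},\mathcal{R}$ is compatible with addition in general.

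Your dichotomy ``$(R,\cdot)$ is a semilattice or a rectangular band'' therefore fails for exactly the semirings you need to reach. $(\mathbb{S}_7,\cdot)$ is neither, since $aw\neq wa$ and $bab=a\neq b$. This is also why your Step 3 contains a ``remaining mixed case'' that Step 2 was supposed to exclude. That case is where all the work lies, and you handle it only with ``should force $|R|\leqslant 3$''. Your proposed Rees-type collapse of the minimal ideal is not a congruence either: collapsing $\{a,w\}$ in $\mathbb{S}_7$ forces $b\sim w$, because $a+b=b$ and $w+b=w$.

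The paper does not reprove this theorem; it cites it. However, the proof of Theorem \ref{classification_of_multiplicatively_idempotent_simple_semirings} supplies exactly the missing structure for $|R|\geqslant 3$, and it never uses Green's relations. The argument runs as follows:
\begin{enumerate}[label=(\roman*)]
\item The regular band identity makes $h_r(x)=rxr$ an endomorphism (Lemma \ref{multiplication_by_the_same_element_from_both_sides_gives_endomorphism}). Its kernel is $\mathrm{id}_R$ or $R\times R$, so each $r$ is either the identity or satisfies $rxr=r$ for all $x$.
\item The congruences $\lambda,\rho$ of Lemma \ref{two_trivial_congruences} rule out the case with no identity. Hence $R=\{1\}\sqcup S$ with $S$ a rectangular band.
\item The congruence $\eta$ has $\{1\}$ as one class and the kernel of a projection on $S$ for the rest. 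It forces $S$ to be a left or right zero band.
\item The additive order then shows that each element of $S$ is either the least or the greatest element of $R$.
\end{enumerate}
This yields the chain $x<1<y$ with $S=\{x,y\}$ a left or right zero band, which is exactly $\mathbb{S}_7$ or $\mathbb{S}_8$, with no further case analysis. The endomorphisms $h_r$ and the congruences $\lambda,\rho,\eta$ are the missing idea that replaces your $\mathcal{D}$-class approach.
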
 
Kepka, Korbelář and Landsmann further asked whether the finiteness assumption is necessary. We prove that it is not: every congruence-simple multiplicatively idempotent semiring is finite. We begin by recalling one result, which reduces the problem to the bi-idempotent case. 
\begin{prop}\cite[Proposition 4.1]{KepkaKorbelarLandsmann}\label{multiplicatively_idempotent_simple_semiring_is_additively_idempotent}
    Let $R$ be a multiplicatively idempotent congruence-simple semiring containing at least three elements. Then $R$ is bi-idempotent. 
\end{prop}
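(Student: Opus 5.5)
The plan is to exploit two semiring endomorphisms that exist in every multiplicatively idempotent semiring $R$. By congruence-simplicity, the kernel of each must be either $\mathrm{id}_R$ or $R\times R$, and we analyse the resulting cases.

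\emph{Step 1: the doubling map.} First, expanding $2x=(x+x)^2$ gives $2x=4x^2=4x$ for all $x$. Next, the map $\delta(x)=2x$ is additive and satisfies $\delta(x)\delta(y)=4xy=2xy=\delta(xy)$, so it is a semiring endomorphism. Its kernel is a congruence, giving two cases.
\begin{itemize}
\item If $\delta$ is injective, then $\delta(x)=2x=4x=\delta(2x)$ forces $x=2x$, so $R$ is additively idempotent and hence bi-idempotent.
\item Otherwise $\delta$ is constant, with value $o$ say. Then $2x=o$ for all $x$, and $o=2o$. Moreover $ox=(2x)x=2x=o$ and $xo=o$, so $o$ is multiplicatively absorbing, and $o+o=o$.
\end{itemize}

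\emph{Step 2: the tripling map in the case $2x\equiv o$.} We have $3x=x+o$. The map $\tau(x)=3x$ is additive, and $\tau(x)\tau(y)=9xy=xy+8xy=xy+o=\tau(xy)$, so $\tau$ is an endomorphism. Again there are two cases.
\begin{itemize}
\item If $\tau$ is injective, then $\tau(3x)=9x=x+o=\tau(x)$ gives $3x=x$, that is, $x+o=x$. So $o$ is an additive zero and $x+x=o$, and $R$ is a Boolean ring. Ideals of a ring give congruences, and a Boolean ring that is simple is $\mathbb F_2$. This has two elements, contradicting $|R|\geqslant 3$.
\item Otherwise $x+o=o$ for all $x$, so $o$ is absorbing for both operations and $x+x=o$.
\end{itemize}

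\emph{Step 3: the remaining nil case.} Let $I=R+R$. This set is closed under addition, under multiplication by elements of $R$, and under adding arbitrary elements. Hence the relation ``$x=y$ or $x,y\in I$'' is a congruence, so $I=\{o\}$ or $I=R$.
\begin{itemize}
\item If $I=R$, write $x=y+z$. Idempotence gives $x=(y+z)^2=x+u$ with $u=yz+zy$. Iterating, $x=x+u+u=x+o=o$, so $R$ is trivial, a contradiction.
\item Thus addition is constant, equal to $o$. Now every equivalence compatible with multiplication is a semiring congruence, so $(R,\cdot)$ must be a congruence-free band with zero $o$ and at least three elements.
\end{itemize}

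I expect this last band-theoretic step to be the main obstacle. I would argue it as follows. For $a\neq o$, the set $RaR\cup\{o\}$ is an ideal, and it contains $a=aaa$. The Rees congruence of this ideal then forces $RaR=R$. Hence $R\setminus\{o\}$ is a single $\mathcal J$-class of a band, and it behaves as a rectangular $0$-band, a rectangular band in which some products may fall to $o$.

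On such a structure I would consider the relation identifying $x$ and $y$ when $x\neq o\neq y$ and $xR=yR$ (the same row), or alternatively when they lie in the same column. One checks that this relation is compatible with multiplication. The point is that $ux$ depends only on $u$ and on the column of $x$, up to falling to $o$.

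Simplicity then forces both the row and column structures to be trivial. This leaves at most one nonzero element $a$, or else exhibits an explicit nontrivial congruence. Either way $|R|\leqslant 2$, which contradicts the hypothesis. Hence only the additively idempotent case from Step 1 survives, and $R$ is bi-idempotent.
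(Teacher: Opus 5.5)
The paper does not prove this proposition; it quotes it from \cite{KepkaKorbelarLandsmann}. Your argument therefore has to stand on its own, and up to the final step it does. The endomorphisms $\delta$ and $\tau$, the Boolean-ring case, the bi-ideal $R+R$ with $x=x+u=x+2u=x+o=o$, and $RaR=R$ for $a\neq o$ via the Rees congruence are all correct.

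\textbf{The gap is the band-theoretic step.} You assert without proof that $R\setminus\{o\}$ ``behaves as a rectangular $0$-band, a rectangular band in which some products may fall to $o$''. You then claim the row relation is compatible with multiplication. In the generality you allow, that claim is false: if $x,y\neq o$ lie in the same row, right multiplication by some $z$ can give $xz\neq o$ but $yz=o$.

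In fact $0$-simplicity together with this shape cannot finish the job. Take the Rees matrix semigroup over the trivial group with sandwich matrix $\begin{pmatrix}1&1\\1&0\end{pmatrix}$. It is a congruence-free $0$-simple semigroup with five elements, and it has exactly this ``rectangular band with some products falling to $o$'' structure. What rules such examples out here is the band law, which your sketch only uses in the form $a=aaa$. The clause ``or else exhibits an explicit nontrivial congruence'' does not supply the missing argument.

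\textbf{The missing lemma is short.}
\begin{itemize}
\item For $x,y\neq o$ you have $x=pyq$ for some $p,q\in R$, because $RyR=R$.
\item Since $yq$ is idempotent, $x=p(yq)(yq)=xyq$, and hence $xyx=(xy)^2q=xyq=x$.
\item Consequently $xy\neq o$, since otherwise $x=(xy)x=o$.
\item So $D:=R\setminus\{o\}$ is a subsemigroup satisfying $xyx=x$. By Theorem~\ref{equivalent_definitions_of_rectangular_band} it is a genuine rectangular band $I\times J$, and no products fall to $o$.
\end{itemize}
Now your row relation ($x=y$, or $x,y\in D$ with the same first coordinate) really is a congruence:
\begin{itemize}
\item left multiplication by $(k,l)$ sends $(i,j),(i,j')$ to $(k,j),(k,j')$;
\item right multiplication by $(k,l)$ sends both to $(i,l)$;
\item multiplication by $o$ sends both to $o$.
\end{itemize}
This relation never relates $o$ to another element, so it is not universal. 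Hence it is the identity and $|J|=1$. Symmetrically the column relation gives $|I|=1$. So $|R|=2$, contradicting $|R|\geqslant 3$. With this lemma inserted, your proof is complete.
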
 
\begin{lemma}\label{two_trivial_congruences}
    Let $R$ be a bi-idempotent congruence-simple semiring containing at least three elements. Define equivalence relations $$\lambda = \big\{(x, y)\in R \times R \mid rx = ry \text{ for every } r \in R\big\}$$ and $$\rho = \big\{(x, y)\in R \times R \mid xr = yr \text{ for every } r \in R\big\}.$$ Then $\lambda = \rho = \mathrm{id}_R$. 
\end{lemma}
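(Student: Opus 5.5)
We will show that $\lambda$ is a congruence on $R$, so that congruence-simplicity gives $\lambda\in\{\mathrm{id}_R, R\times R\}$. We then rule out $\lambda = R\times R$ by exhibiting a non-trivial proper congruence in that case. The statement for $\rho$ follows by the left–right dual argument.

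\emph{Step 1: $\lambda$ is a congruence.} $\lambda$ is clearly an equivalence relation. Suppose $(x,y)\in\lambda$ and $t\in R$. For every $r\in R$ we have
\[
r(x+t)=rx+rt=ry+rt=r(y+t),
\]
so $(x+t,y+t)\in\lambda$. Next, $r(tx)=(rt)x=(rt)y=r(ty)$, since $rt\in R$. Finally, $r(xt)=(rx)t=(ry)t=r(yt)$. Hence $\lambda\in\mathrm{Con}(R)$. The same computations with sides swapped show $\rho\in\mathrm{Con}(R)$. So each of $\lambda,\rho$ is either $\mathrm{id}_R$ or $R\times R$.

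\emph{Step 2: excluding $\lambda=R\times R$.} Suppose $\lambda=R\times R$. Then $rx=ry$ for all $r,x,y$. Taking $y=r$ and using multiplicative idempotence gives $rx=r^2=r$. So $(R,\cdot)$ is a left zero semigroup.

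In this situation every equivalence $\sim$ on $R$ compatible with $+$ is automatically a semiring congruence. Indeed, if $x\sim y$ then $tx=t=ty$ and $xt=x\sim y=yt$. It therefore suffices to find a non-trivial proper congruence of the semilattice $(R,+)$, using the order $\leqslant$ from Section~\ref{section_background}.

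Choose $a\in R$ that is not the greatest element of $R$. Such an $a$ exists since $|R|\geqslant 2$. Define $\varphi\colon R\to\{0,1\}$ by $\varphi(x)=0$ if $x\leqslant a$ and $\varphi(x)=1$ otherwise. Because $x+y=x\lor y$, we have $x+y\leqslant a$ if and only if $x\leqslant a$ and $y\leqslant a$. Hence $\varphi$ is a homomorphism onto the two-element semilattice $(\{0,1\},\max)$.

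The kernel of $\varphi$ is therefore a congruence on $R$. It is not universal, since $\varphi(a)=0$ and some element lies outside the down-set of $a$. It is not the identity, since $\varphi$ takes only two values while $|R|\geqslant 3$. This contradicts congruence-simplicity, so $\lambda=\mathrm{id}_R$.

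\emph{Step 3: the dual.} Dually, $\rho=R\times R$ forces $(R,\cdot)$ to be a right zero semigroup, and the same kernel of $\varphi$ gives a contradiction. Hence $\rho=\mathrm{id}_R$.

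The only step needing an idea is Step 2. The key observation there is that left zero multiplication makes every additive congruence a semiring congruence. This reduces the problem to the elementary fact that a semilattice with at least three elements is not congruence-simple; the hypothesis $|R|\geqslant 3$ is used exactly there.
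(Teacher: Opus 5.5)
Your proof is correct and follows the same route as the paper. Both arguments show that $\lambda$ (and dually $\rho$) is a congruence, observe that $\lambda = R\times R$ makes $(R,\cdot)$ a left zero semigroup so that every congruence of the semilattice $(R,+)$ is a semiring congruence, and then exhibit a non-trivial proper semilattice congruence. The one difference is the choice of that congruence: the paper uses $x + a = y + a$ with $a$ neither least nor greatest, while you use the kernel of the indicator of the down-set of a non-greatest $a$, with non-triviality coming from $|R| \geqslant 3$ by pigeonhole, which works equally well.
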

\begin{proof}
    It can be easily seen that $\lambda$ is a semiring congruence. Indeed, if $(x, y) \in \lambda$, then for all $r, t\in R$, 
    \begin{equation*}\begin{aligned}
        r(x + t) &= rx + rt = ry + rt = r(y +t), \\ 
        r(xt) &= (rx)t = (ry)t = r(yt), \\ 
        r(tx) &= (rt)x = (rt)y = r(ty). 
    \end{aligned}\end{equation*}
    Since $R$ is congruence-simple, $\lambda = \mathrm{id}_R$ or $\lambda = R \times R$. Suppose, for the sake of contradiction, that $\lambda = R \times R$. Then for all $x, y\in R$, we have $xy = xx = x$ and hence every element is left absorbing. Since the additive reduct $(R, +)$ is a semilattice and $|R| \geqslant 3$, we can choose an element $a \in R$ which is neither least nor greatest with respect to the order induced by addition. Now define an equivalence relation $\theta \subseteq R \times R$ by $$(x, y)\in\theta\iff x + a = y + a.$$ We prove that it is a semiring congruence. Indeed, compatibility with addition is trivial. Moreover, if $(x, y)~\in~\theta$, then for every $r \in R$, $xr = x$, $yr = y$, $rx = r$, and $ry = r$, so $(xr, yr), (rx, ry) \in\theta.$ Since the element $a$ is not least, there exists $s \in R$ such that $s + a \neq s$. As $(s + a, s) \in \theta$, we get $\theta \neq \mathrm{id}_R$. Similarly, since $a$ is not greatest, there exists $t \in R$ such that $t + a \neq a$ and in consequence $(t + a, a) \not\in\theta$. Taken together, $\theta$~is non-trivial proper semiring congruence, contradicting simplicity of $R$. Hence $\lambda = \mathrm{id}_R$ and analogously we show that also $\rho = \mathrm{id}_R$.
\end{proof}
\begin{lemma}\label{multiplication_by_the_same_element_from_both_sides_gives_endomorphism}
    Let $R$ be a semiring whose multiplicative reduct is a regular band. Then for every $r \in R$, the map $$h_r : R \to R, \quad h_r(x) = rxr$$ is a semiring endomorphism. 
\end{lemma}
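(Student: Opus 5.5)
We need to check that $h_r$ preserves addition and multiplication. Additivity follows directly from the two distributive laws and needs no hypothesis on the multiplication: for all $x, y \in R$,
$$h_r(x+y) = r(x+y)r = (rx + ry)r = rxr + ryr = h_r(x) + h_r(y).$$
So the only real content lies in multiplicativity, which must come from the regular band identity.

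For multiplicativity we must show $rxyr = rxr\,ryr$ for all $r, x, y \in R$. Since $r^2 = r$, the right-hand side equals $rxryr$, so the claim reduces to
$$rxyr = rxryr.$$
This is precisely the defining identity of a regular band, $xyzx = xyxzx$, with the variables renamed. Take the triple $(r, x, y)$ in place of $(x, y, z)$: the identity $r\,x\,y\,r = r\,x\,r\,y\,r$ is exactly the required equality.

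It follows that $h_r(xy) = rxyr = rxryr = (rxr)(ryr) = h_r(x)h_r(y)$, and hence $h_r$ is a semiring endomorphism. I see no genuine obstacle here. The only point needing care is to check that the instance of the regular band law used has the correct order of variables. This matters because the semiring is not assumed commutative, so the identity cannot be rearranged freely.
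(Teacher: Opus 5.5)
Your proposal is correct and follows essentially the same argument as the paper. Additivity comes from the two distributive laws, and multiplicativity comes from the regular band identity instantiated as $rxyr = rxryr$ together with $r^2 = r$, giving $rxyr = rxrryr = h_r(x)h_r(y)$.
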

\begin{proof}
    Let $r, x, y \in R$. Using distributivity, idempotence, and the regular band identity $xyzx = xyxzx$, we obtain 
    \begin{align}
        h_r(x + y) &= r(x + y)r = rxr + ryr = h_r(x) + h_r(y), \notag\\ 
        h_r(xy) &= rxyr = rxryr = rxrryr = h_r(x)h_r(y). \tag*{\qedhere}
    \end{align}
\end{proof}
\begin{prop}\label{every_element_is_multiplicative_identity_or_satisfies_certain_equality}
    Let $R$ be a congruence-simple semiring whose multiplicative reduct is a regular band. If $|R| \geqslant 2$, then for every element $r \in R$ exactly one of the following alternatives holds: 
    \begin{enumerate}[label=(\arabic*)]
        \item $r$ is a multiplicative identity of $R$, 
        \item $rxr = r$ for every $x \in R$.
    \end{enumerate}
\end{prop}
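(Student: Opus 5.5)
Fix $r \in R$ and consider the endomorphism $h_r$ from Lemma~\ref{multiplication_by_the_same_element_from_both_sides_gives_endomorphism}. Its kernel $\ker h_r = \{(x,y) \mid rxr = ryr\}$ is a semiring congruence. By congruence-simplicity, either $\ker h_r = \mathrm{id}_R$ or $\ker h_r = R\times R$.

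\emph{Universal kernel.} Suppose $\ker h_r = R\times R$. Then $rxr = rrr = r$ for every $x$, which is alternative (2).

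\emph{Trivial kernel.} Suppose $\ker h_r = \mathrm{id}_R$, so $h_r$ is injective. Idempotence gives $h_r(h_r(x)) = r(rxr)r = rxr = h_r(x)$. Injectivity then yields $h_r(x) = x$, that is, $rxr = x$ for all $x$. Consequently
\[
rx = r(rxr) = rxr = x \quad\text{and}\quad xr = (rxr)r = rxr = x,
\]
so $r$ is a multiplicative identity, which is alternative (1).

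\emph{Exclusivity.} If both alternatives held, then $r$ would be an identity with $x = rxr = r$ for every $x$. This forces $|R| = 1$, contradicting $|R|\geqslant 2$.

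\emph{Expected obstacle.} None of this is hard. The only points needing care are:
\begin{itemize}
\item checking that the kernel of an endomorphism is a congruence, which is standard;
\item using idempotence of $h_r$ to pass from injectivity to the identity map.
\end{itemize}

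Note that Lemma~\ref{two_trivial_congruences} is not needed here.
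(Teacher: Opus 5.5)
Your proposal is correct and follows the paper's proof: both take the kernel of the endomorphism $h_r$ and use simplicity to get either injectivity (then idempotence of $h_r$ forces $rxr=x$, so $r$ is an identity) or a constant map ($rxr=rrr=r$). Your explicit exclusivity argument fills in the step the paper leaves as ``easily seen''.
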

\begin{proof}
    Let $r \in R$. It can be easily seen that, if $|R| \geqslant 2$, then conditions $(1)$ and $(2)$ are mutually exclusive. By Lemma \ref{multiplication_by_the_same_element_from_both_sides_gives_endomorphism}, $$h_r : R \to R, \quad h_r(x) = rxr$$ is an endomorphism of $R$. Hence its kernel is a semiring congruence. Since $R$ is congruence-simple, $\ker(h_r) = \mathrm{id}_R$ or $\ker(h_r) = R \times R$. 

    First suppose that $\ker(h_r) = \mathrm{id}_R$. Then $h_r$ is injective. Since $$h_r\big(h_r(x)\big) = h_r(rxr) = rrxrr = rxr = h_r(x),$$ we have $h_r(x) = x$, so $h_r$ is the identity map. Thus, $rxr = x$ for every $x \in R$. Applying $h_r$ to $rx$ gives $$rx = h_r(rx) = rrxr = rxr = x.$$ Similarly, we apply $h_r$ to $xr$ and obtain $$xr = h_r(xr) = rxrr = rxr = x.$$ Therefore, $r$ is a multiplicative identity of $R$. 

    Now suppose that $\ker(h_r) = R \times R$. Then $h_r$ is constant and in consequence, $$rxr = h_r(x) = h_r(r) = rrr = r$$ for every $x \in R$. 
\end{proof}
\begin{prop}\label{R_has_a_multiplicative_identity}
    Let $R$ be a bi-idempotent congruence-simple semiring containing at least three elements. Then $R$ has a multiplicative identity. 
\end{prop}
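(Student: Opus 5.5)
The plan is to combine the Pastijn--Zhao theorem with the dichotomy of Proposition~\ref{every_element_is_multiplicative_identity_or_satisfies_certain_equality}, and then use the triviality of $\lambda$ and $\rho$ from Lemma~\ref{two_trivial_congruences} to reach a contradiction. Since $R$ is bi-idempotent, Theorem~\ref{multiplicative_reduct_of_bi_idempotent_semiring_is_regular_band} says that $(R,\cdot)$ is a regular band. So Proposition~\ref{every_element_is_multiplicative_identity_or_satisfies_certain_equality} applies, because $|R|\geqslant 3\geqslant 2$: every $r\in R$ either is a multiplicative identity or satisfies $rxr=r$ for all $x\in R$.

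I argue by contradiction. Suppose $R$ has no multiplicative identity. Then the second alternative holds for every $r$, so $rxr=r$ for all $r,x\in R$, and $(R,\cdot)$ is a rectangular band. By Theorem~\ref{equivalent_definitions_of_rectangular_band}, this gives the identity $xyz=xz$ for all $x,y,z\in R$.

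Next I use Lemma~\ref{two_trivial_congruences}, which gives $\lambda=\rho=\mathrm{id}_R$.
\begin{itemize}
\item For any $x,y\in R$ and every $r\in R$ we have $r(yx)=ryx=rx$. Hence $(yx,x)\in\lambda$, and so $yx=x$.
\item Symmetrically, $(xy)r=xyr=xr$ for every $r$. Hence $(xy,x)\in\rho$, and so $xy=x$.
\end{itemize}
Applying the first identity with the roles of $x$ and $y$ swapped gives $xy=y$. Combining this with $xy=x$ yields $x=y$ for all $x,y\in R$. This contradicts $|R|\geqslant 3$, so $R$ must have a multiplicative identity.

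No step here is really an obstacle, since all the work is done by the earlier results. The one point to check is that Proposition~\ref{every_element_is_multiplicative_identity_or_satisfies_certain_equality} really needs only the regular-band hypothesis, which Theorem~\ref{multiplicative_reduct_of_bi_idempotent_semiring_is_regular_band} supplies. The other point is that Lemma~\ref{two_trivial_congruences} uses $|R|\geqslant 3$; this is exactly where the small semirings $\mathbb{S}_5,\mathbb{S}_6$, which have no identity, are excluded.
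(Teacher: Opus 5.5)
Your proof is correct and follows the paper's argument: regular band via Pastijn--Zhao, then the dichotomy forcing a rectangular band, then $xyz=xz$ together with $\lambda=\rho=\mathrm{id}_R$ to get $xy=y$ and $xy=x$. The only cosmetic difference is that you derive $yx=x$ from $\lambda$ and then swap $x$ and $y$, whereas the paper reads $(xy,y)\in\lambda$ off directly.
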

\begin{proof}
    By Theorem \ref{multiplicative_reduct_of_bi_idempotent_semiring_is_regular_band}, the multiplicative reduct of $R$ is a regular band. Suppose, for the sake of contradiction, that $R$ has no multiplicative identity. Then by Proposition \ref{every_element_is_multiplicative_identity_or_satisfies_certain_equality}, the equality $rxr = r$ holds for all $r, x \in R$. Hence the multiplicative reduct of $R$ is a rectangular band. Let $x, y \in R$. By Theorem \ref{equivalent_definitions_of_rectangular_band}, for every $r \in R$, $$r(xy) = rxy = ry,$$ so $(xy, y)\in\lambda.$ By Lemma \ref{two_trivial_congruences}, $\lambda = \mathrm{id}_R$ and in consequence $xy = y$. Similarly, for every $r \in R$, $$(xy)r = xyr = xr,$$ so $(xy, x)\in\rho$. Again by Lemma \ref{two_trivial_congruences}, $\rho = \mathrm{id}_R$ and hence $xy = x$. Thus, $x = y$ for all $x, y \in R$, which leads to a contradiction with $|R| \geqslant 3$. Therefore, $R$ has a multiplicative identity.  
\end{proof}
We now complete the classification. The idea is as follows: once a multiplicative identity exists, the set of non-identity elements forms a rectangular band. It turns out that simplicity of $R$ forces this rectangular band to be either a left zero band or a right zero band. Using distributivity we then show that it has at most two elements.
\begin{theorem}\cite[Conjecture 1]{KepkaKorbelarLandsmann}\label{classification_of_multiplicatively_idempotent_simple_semirings}
    Let $R$ be a multiplicatively idempotent congruence-simple semiring. Then $R$ is finite (and isomorphic to one of the semirings $\mathbb{S}_1, \ldots, \mathbb{S}_8$). 
\end{theorem}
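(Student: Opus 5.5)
We may assume $|R| \geqslant 3$, since otherwise $R$ is trivially finite and Theorem \ref{classification_of_finite_multiplicatively_idempotent_simple_semirings} applies directly. By Proposition \ref{multiplicatively_idempotent_simple_semiring_is_additively_idempotent}, $R$ is then bi-idempotent. By Proposition \ref{R_has_a_multiplicative_identity}, it has a multiplicative identity $1$.

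Set $S = R \setminus \{1\}$. By Proposition \ref{every_element_is_multiplicative_identity_or_satisfies_certain_equality}, every $r \in S$ satisfies $rxr = r$ for all $x \in R$. First I check that $S$ is closed under multiplication. If $xy = 1$ with $x \neq 1$, then $x = x y x = x\cdot 1 = x$ gives nothing, so I use $x = x(xy)$ instead: $x = xxy = xy = 1$, a contradiction. Hence $S$ is a rectangular band, and by Theorem \ref{equivalent_definitions_of_rectangular_band} we may write $S \cong I \times J$.

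The key step is to show that $|I| = 1$ or $|J| = 1$. On $R$ I consider the relation $\lambda'$ identifying $x,y \in S$ whenever $sx = sy$ for all $s \in S$, that is, elements of $S$ with the same $J$-coordinate, with $1$ kept in its own class. The analogous relation $\rho'$ compares $I$-coordinates. I would check that these are semiring congruences. Compatibility with multiplication is immediate from $S$ being an ideal of the multiplicative reduct that absorbs everything into a rectangular band. Compatibility with addition requires understanding $x + t$ for $x \in S$. Here I would use $s(x+t)s = s$ for $x + t \in S$, and distributivity, to show that the $J$-coordinate of $x+t$ depends only on those of $x$ and $t$ (via $s(x+t) = sx + st$). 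The case $x + t = 1$ needs separate handling: then $s = s(x+t)s$ gives $s = sxs + sts = s + s$, which is consistent, so instead I would show that $x + t = 1$ forces $y + t = 1$ whenever $x\,\lambda'\,y$. I expect this addition-compatibility, especially the interaction with $1$, to be the main obstacle. If a direct argument fails, I would first try the explicit congruence generated by collapsing all of $S$, which is $\lambda'$ when $|I| = 1$.

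By simplicity, each of $\lambda'$ and $\rho'$ is trivial or universal. Neither can be universal, since $1$ is not related to $S$. So both are the identity. In particular, $\lambda' = \mathrm{id}$ means the $I$-coordinate is redundant, that is, $|I| = 1$ or the map is injective. Combining this with $\rho'$, I conclude that $S$ is a left zero or a right zero semigroup.

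Say $S$ is left zero, so $xy = x$ on $S$. Take $a, b \in S$. Then $a + b$ lies in $S$ or equals $1$. If $a + b \in S$, then $a = a(a+b) \ldots$; more usefully, $(a+b)a = a + ba = a + b$, and $a(a+b) = a$ gives $a = a + ab = a + a = a$, which is uninformative. So I would instead use the additive order. I would show that the relation "$x \leqslant y$ or both lie in a fixed down-set" is a congruence, or use the fact that $x \mapsto x + c$ is an endomorphism for suitable $c$ (via $\theta_c$: $x + c = y + c$, as in Lemma \ref{two_trivial_congruences}). Multiplicative compatibility holds because $S$ is left zero. Simplicity then forces $S$ to be a chain of length at most $2$ in the semilattice order. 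Therefore $|R| \leqslant 3$, and finiteness follows. Theorem \ref{classification_of_finite_multiplicatively_idempotent_simple_semirings} then gives the list $\mathbb{S}_1,\dots,\mathbb{S}_8$.
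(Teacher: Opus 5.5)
Your outline matches the paper's: bi-idempotence, a multiplicative identity $1$, $S=R\setminus\{1\}$ a rectangular band $I\times J$, reduction to a left or right zero band, then $|S|\leqslant 2$. Your proof that $S$ is closed under multiplication is correct. However, both substantive steps have genuine gaps.

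\textbf{The left/right zero step.} Your relations $\lambda'$ and $\rho'$ are not congruences unconditionally. In $\mathbb{S}_7$ the identity is $b$ and $S=\{a,w\}$ is a left zero band, so $\lambda'$ collapses $a$ and $w$. Yet $a+b=b$ while $w+b=w$, so $\lambda'$ is not compatible with addition. Your deduction is also self-defeating: if both relations were congruences equal to $\mathrm{id}_R$, you would get $|I|=|J|=1$, i.e.\ $|R|=2$.

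The missing idea is to argue by contradiction from $|I|,|J|\geqslant 2$, and to use that hypothesis exactly where you flagged the obstacle. If $x\,\lambda'\,y$ with $x,y\in S$, then $s(x+t)=sx+st=sy+st=s(y+t)$ for all $s\in S$. If $x+t=1$ but $y+t\in S$, this reads $s(y+t)=s$ for all $s\in S$. That forces every $s\in S$ to have the $J$-coordinate of $y+t$, which is impossible when $|J|\geqslant 2$. So $\lambda'$ is a congruence. It is non-trivial because $|I|\geqslant 2$, and proper because $\{1\}$ is a class, contradicting simplicity. (The paper runs the same argument with $I$-coordinates.)

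\textbf{The bound $|S|\leqslant 2$.} You have no working argument here. The relation $\theta_c$ given by $x+c=y+c$ is not compatible with right multiplication by $r\in S$, because $1\cdot r=r$ whereas $xr=x$ for $x\in S$. Concretely, in $\mathbb{S}_7$ we have $a\,\theta_b\,b$, but $aw=a$ and $bw=w$ are not $\theta_b$-related. The relation ``$x\leqslant y$ or both lie in a fixed down-set'' is not even symmetric. The claim that simplicity forces a chain of length at most $2$ is asserted without proof.

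What is needed is a direct computation with $1+x$ for $x\in S$, using that $S$ is left zero.
\begin{itemize}
\item If $1+x=1$, right multiplication by $y\in S$ gives $y+x=y$. So $x$ is the least element of $R$.
\item Otherwise $1+x\in S$, so $1+x=(1+x)x=x+x=x$. Moreover $x=(1+x)y=y+xy=y+x$ for every $y\in S$. So $x$ is the greatest element of $R$.
\end{itemize}
Since a poset has at most one least and one greatest element, $|S|\leqslant 2$ and hence $|R|\leqslant 3$.
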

\begin{proof}
    If $|R| \leqslant 2$, then $R$ is finite. Assume therefore that $|R| \geqslant 3$. By Proposition \ref{multiplicatively_idempotent_simple_semiring_is_additively_idempotent}, the semiring $R$ is bi-idempotent. By Proposition \ref{R_has_a_multiplicative_identity}, the semiring $R$ has a multiplicative identity $1$. Put $S := R \setminus\{1\}$. We claim that $S$ is a rectangular band. If $x, y \in S$ and $xy = 1$, then $$x = x1 = xxy = xy = 1,$$ which leads to a contradiction. Hence $S$ is closed under multiplication. By Proposition \ref{every_element_is_multiplicative_identity_or_satisfies_certain_equality}, the equality $rxr = r$ holds for all $r,x \in S$, so $S$ is a rectangular band. 

    By Theorem \ref{equivalent_definitions_of_rectangular_band}, $S$ is isomorphic to a semigroup $I \times J$, where $I$ and $J$ are non-empty sets and multiplication is given by $(i,j)(k,l) = (i,l)$. We show that one of the sets $I, J$ is a singleton. Assume towards a contradiction that $|I|, |J| \geqslant 2$. Define an equivalence relation $\eta\subseteq R \times R$ by putting 
    \begin{equation*}\begin{aligned}
        &[1] = \{1\}, \\
        &\big((i,j), (k,l)\big)\in\eta \iff i = k  
    \end{aligned}\end{equation*}
    for all $(i, j),(k,l)\in S$, where $[1]$ denotes the equivalence class of the multiplicative identity $1$. We prove that it is a semiring congruence. Indeed, compatibility with multiplication is trivial. Let $x = (i, j)$, $y = (i, j')$ and let $z\in R$. For every $r \in S$, distributivity gives $(x + z)r = xr + zr$ and $(y + z)r = yr + zr.$ Since $xr = yr$, we have $$(x + z)r = (y + z)r$$ for every $r \in S$. Thus, $(x + z, y + z)\in\eta$, as required. Therefore, $\eta\subseteq R \times R$ is a non-trivial proper semiring congruence, which leads to a contradiction with simplicity of $R$. Hence one of the sets $I, J$ is a singleton.

    We conclude that $S$ is either a left zero band or a right zero band. Without loss of generality, assume that $S$ is a left zero band, so $xy = x$ for all $x, y\in S$. Now fix elements $x, y \in S$. Since $R = S \cup\{1\}$, either $1 + x = 1$ or $1 + x\in S$. If $1 + x = 1$, then multiplying both sides by $y$ on the right gives $y + x = y$. Thus, $x \leqslant y$ and $x \leqslant 1$. Hence $x$ is the least element of $R$. In the second case, assume $1 + x \in S$. Then $$1 + x = (1 + x)x = x + xx = x,$$ so $x \geqslant 1$. Moreover, 
    $$x = 1 + x = (1 + x)y = y + xy = y + x$$ and thus, $x \geqslant y$. Hence $x$ is the greatest element of $R$.
    
    Therefore, every element of $S$ is either the least element of $R$ or the greatest element of $R$. Since a poset has at most one least element and at most one greatest element, it follows that $|S| \leqslant 2$ and in consequence $|R| \leqslant 3$. By Theorem \ref{classification_of_finite_multiplicatively_idempotent_simple_semirings}, $R$ is isomorphic to one of the semirings $\mathbb{S}_1, \ldots, \mathbb{S}_8$. This completes the proof. 
\end{proof}

\section{Multiplicatively divisible commutative semirings}\label{section_multiplicatively_divisible_commutative_semirings}
\subsection{The additively cancellative case}
\begin{definition}\cite[Section 1]{ORZECH197081}
    A ring $R$ is called \emph{residually finite} if for every nonzero element $r \in R$ there exists an ideal $I$ of $R$ such that $r\not\in I$ and $R/I$ is finite. 
\end{definition}
\begin{theorem}\cite[Theorem 1]{ORZECH197081}\label{finitely_generated_commutative_ring_is_residually_finite}
    Let $R$ be a finitely generated commutative ring. Then $R$ is residually finite. 
\end{theorem}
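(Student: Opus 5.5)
The plan is to reduce to the unital case, separate $r$ from zero using the Krull intersection theorem at a suitable maximal ideal, and then show that the resulting quotient is finite. Since rings here need not have an identity, I first pass to the unitization: let $A = \mathbb{Z} \oplus R$ with the usual multiplication $(m,a)(n,b) = (mn, mb + na + ab)$. Then $A$ is a commutative ring with identity, finitely generated as a ring by $1$ together with the generators of $R$, and $R$ is an ideal of $A$. Suppose we find an ideal $J$ of $A$ with $r \notin J$ and $A/J$ finite. Then $I := J \cap R$ is an ideal of $R$ with $r \notin I$, and $R/I$ embeds into $A/J$, so $R/I$ is finite. Hence it suffices to treat finitely generated commutative rings with identity. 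Such an $A$ is a quotient of $\mathbb{Z}[x_1,\dots,x_n]$, so by the Hilbert basis theorem it is Noetherian.

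Fix $0 \neq r \in A$. The annihilator $\mathrm{Ann}(r)$ is a proper ideal, since $1 \notin \mathrm{Ann}(r)$. Choose a maximal ideal $\mathfrak m \supseteq \mathrm{Ann}(r)$. Then $r/1 \neq 0$ in the localization $A_{\mathfrak m}$, because otherwise some $s \notin \mathfrak m$ would satisfy $sr = 0$. The ring $A_{\mathfrak m}$ is a Noetherian local ring, so the Krull intersection theorem gives $\bigcap_k \mathfrak m^k A_{\mathfrak m} = 0$. Hence there is $k$ with $r/1 \notin \mathfrak m^k A_{\mathfrak m}$, and in particular $r \notin \mathfrak m^k$. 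I take $I := \mathfrak m^k$.

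It remains to show that $A/\mathfrak m^k$ is finite. Filter it by $\mathfrak m^i/\mathfrak m^{i+1}$ for $0 \leqslant i < k$. Each factor is a finitely generated $A$-module, since $A$ is Noetherian, and it is annihilated by $\mathfrak m$. So each factor is a finite-dimensional vector space over the field $K = A/\mathfrak m$. Consequently, finiteness of $A/\mathfrak m^k$ reduces to finiteness of $K$.

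The main obstacle is this step: showing that a field $K$ which is finitely generated as a ring must be finite. This is the arithmetic form of the Nullstellensatz, and I would argue in two cases according to the characteristic.

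\textbf{Positive characteristic $p$.} Here $K$ is a finitely generated $\mathbb{F}_p$-algebra that is a field. By Zariski's lemma, $K$ is a finite extension of $\mathbb{F}_p$, hence finite.

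\textbf{Characteristic $0$.} Here $\mathbb{Q} \subseteq K$, and by Zariski's lemma $K$ is a finite extension of $\mathbb{Q}$. Write $K$ as generated as a ring by $y_1,\dots,y_m$. Each $y_j$ is integral over $\mathbb{Z}[1/N]$ for a suitable $N$. It follows that $K$ is integral over $\mathbb{Z}[1/N]$. By the standard fact that if a field is integral over a subring then that subring is a field, $\mathbb{Z}[1/N]$ would have to be a field. This is false, since any prime not dividing $N$ is not invertible in $\mathbb{Z}[1/N]$. So characteristic $0$ is impossible.

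Therefore $K$ is finite, $A/\mathfrak m^k$ is finite, and the theorem follows.
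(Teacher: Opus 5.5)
Your proof is correct. The paper gives no proof of this statement; it cites it from Orzech and Ribes, so there is no internal argument to compare with. What you give is a self-contained proof along the standard route, using only the Hilbert basis theorem, the Krull intersection theorem and Zariski's lemma:
\begin{enumerate}
\item pass to a Noetherian unital ring;
\item separate $r$ from $0$ by a power $\mathfrak m^k$ of a maximal ideal $\mathfrak m \supseteq \mathrm{Ann}(r)$, using the Krull intersection theorem in $A_{\mathfrak m}$;
\item deduce finiteness of $A/\mathfrak m^k$ from the filtration by $\mathfrak m^i/\mathfrak m^{i+1}$, since a field finitely generated as a ring is finite.
\end{enumerate}
Your characteristic-$0$ exclusion is sound. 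Because $\mathbb{Z}[1/N]\subseteq K$, you have $K=\mathbb{Z}[1/N][y_1,\dots,y_m]$, so $K$ is integral over $\mathbb{Z}[1/N]$, and that ring is not a field.

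The unitization step is not a formality for this paper. In the additively cancellative case, the ring $G(R)$ is built from a semiring that need not have a multiplicative identity, so $G(R)$ may be non-unital. For example, the positive even integers give $G(R)\cong 2\mathbb{Z}$. Moreover, $G(R)$ is only known to be finitely generated as a ring without identity. Your passage to $\mathbb{Z}\oplus R$ and back via $J\cap R$ is exactly what makes the cited result apply in that setting.
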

\begin{remark}\cite[Theorem 2.5]{ClarkHollandSzekely}\label{finite_divisible_semigroup_is_a_finite_band}
    Let $S$ be a divisible semigroup. Then $S$ is a finite band in each of the following two cases: 
    \begin{enumerate}[label=(\arabic*)]
        \item $S$ is commutative and finitely generated, 
        \item $S$ is finite.
    \end{enumerate}
\end{remark}
\begin{construction}\label{construction_of_completion_group}
    Let $(S, +)$ be a cancellative commutative semigroup. Define an equivalence relation $\tau\subseteq(S\times S) \times (S\times S)$ by $$\big((x, y), (z, w)\big)\in\tau \iff x + w = y + z.$$ Let $G(S) := (S \times S)/\tau.$ Denote by $[x, y]$ the equivalence class of $(x, y)$ in $G(S)$. Now define addition and inverse by $$[x, y] + [z, w] := [x + z, y + w], \quad -[x, y] := [y,x].$$ This makes $G(S)$ an abelian group. Its identity element is given by $0 := [x, x]$ for any $x \in S$.

    Fix $e \in S$ and define the map $$\iota: S \to G(S), \quad \iota(s) = [s + e, e].$$ Then $\iota$ is an injective semigroup homomorphism. Indeed, if $$[s + e, e] = [t + e, e],$$ then $$(s + e) + e = (t + e) + e,$$ so cancellation gives $s = t$. Moreover, $$\iota(s) + \iota(t) = [s + e, e] + [t + e, e] = [s + t + 2e, 2e] = [s + t + e, e] = \iota(s + t).$$ Finally, every element of $G(S)$ is a difference of two elements from $\iota(S)$, because $$[s, t] = \iota(s) - \iota(t).$$
\end{construction}
\begin{prop}\label{finitely_generated_commutative_multiplicatively_divisible_additively_cancellative_semiring_is_multiplicatively_idempotent}
    Let $R$ be a finitely generated commutative semiring. If $R$ is multiplicatively divisible and additively cancellative, then $R$ is multiplicatively idempotent. 
\end{prop}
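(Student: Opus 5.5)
Following Construction \ref{construction_of_completion_group}, the plan is to embed $R$ into a ring and then use residual finiteness (Theorem \ref{finitely_generated_commutative_ring_is_residually_finite}) together with Remark \ref{finite_divisible_semigroup_is_a_finite_band}. Since $(R,+)$ is cancellative and commutative, form the group $G(R)$. We extend multiplication to $G(R)$ by
$$[x,y]\cdot[z,w] := [xz+yw,\; xw+yz].$$
Checking that this is well defined is routine and uses additive cancellativity. Distributivity and commutativity of $R$ carry over, so $T := G(R)$ becomes a commutative ring, possibly without identity. The map $\iota$ becomes an injective semiring homomorphism. To see this, check $\iota(s)\iota(t)=[st+e^2+\dots]$ directly, or more cleanly use the embedding $s\mapsto[s+e,e]$ and note that $[a,b]$ depends only on the difference. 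If $R$ is generated as a semiring by $g_1,\dots,g_n$, then $T$ is generated as a ring by $\iota(g_1),\dots,\iota(g_n)$, because every element of $T$ is a difference of elements of $\iota(R)$. So $T$ is a finitely generated commutative ring.

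Now fix $r\in R$ and suppose $r^2\neq r$. Then $a:=\iota(r)^2-\iota(r)\neq 0$ in $T$. By Theorem \ref{finitely_generated_commutative_ring_is_residually_finite} there is an ideal $I$ with $a\notin I$ and $T/I$ finite. Let $\pi\colon T\to T/I$ be the quotient map and put $S:=\pi(\iota(R))$. This is a finite multiplicative subsemigroup of $T/I$. It is divisible, since it is the homomorphic image of the divisible semigroup $(R,\cdot)$: if $y^n=x$ in $R$, then $\pi\iota(y)^n=\pi\iota(x)$. By Remark \ref{finite_divisible_semigroup_is_a_finite_band}(2), $S$ is a band. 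In particular $\pi(\iota(r))^2=\pi(\iota(r))$, i.e.\ $a\in I$, a contradiction. Hence $r^2=r$ for all $r$.

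I expect the main obstacle to be the first step: verifying carefully that the extended multiplication on $G(R)$ is well defined and associative, and that $\iota$ respects multiplication. The concern is that $\iota(s)=[s+e,e]$ involves the auxiliary element $e$. The cleanest route is to compute $[s+e,e][t+e,e]=[st+se+et+2e^2,\; se+et+2e^2]=[st+e,e]$ via the defining relation of $\tau$, which works because the common summands cancel. If $R$ has a zero-like issue this is harmless, since no identity is required; residual finiteness holds for rings without unit by the cited theorem as stated.
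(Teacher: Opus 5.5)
Your proposal is correct and follows essentially the same route as the paper. You embed $R$ into the ring $G(R)$ using the multiplication $[x,y]\cdot[z,w]=[xz+yw,xw+yz]$, separate $\iota(r)^2-\iota(r)$ from $0$ by a finite quotient via residual finiteness, and note that the image of $\iota(R)$ is a finite divisible semigroup, hence a band by Remark~\ref{finite_divisible_semigroup_is_a_finite_band}(2). Your caution that $G(R)$ may lack an identity is a point the paper leaves implicit; if needed, pass to the Dorroh extension $\mathbb{Z}\oplus G(R)$ and intersect the separating ideal with $G(R)$.
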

\begin{proof}
    Since the reduct $(R, +)$ is a cancellative commutative semigroup, we apply Construction \ref{construction_of_completion_group} to obtain the abelian group $(G(R), +)$ and the injective semigroup homomorphism $\iota: R \to G(R)$. Now define multiplication in $G(R)$ by $$[x, y] \cdot [z, w] := [xz + yw, xw + yz].$$ This operation is well defined, since multiplication in $R$ is distributive. Thus, $G(R)$ becomes a commutative ring. The map $\iota: R \to G(R)$ preserves multiplication as well, because $$\iota(r)\cdot\iota(s) = [r + e, e] \cdot [s + e, e] = [rs + re + se + 2e^2, re + se + 2e^2] = [rs + e, e] = \iota(rs).$$ Thus, the map $\iota : R \to G(R)$ is an embedding and $R$ may be viewed as a subsemiring of $G(R)$.

    Recall that each element of $G(R)$ is of the form $[x, y] = \iota(x) - \iota(y)$. Therefore, if $x_1, \ldots, x_k$ generate $R$ as a semiring, then $G(R)$ is generated as a ring by $\iota(x_1), \ldots, \iota(x_k)$. Now suppose that $R$ is not multiplicatively idempotent. Hence there exists $r \in R$ such that $r^2 \neq r$. Since $\iota$ is injective, we have $\iota(r)^2 - \iota(r) \neq 0$ in $G(R)$. By Theorem \ref{finitely_generated_commutative_ring_is_residually_finite}, the ring $G(R)$ is residually finite, so there exists an ideal $I$ of $G(R)$ such that $\iota(r)^2 - \iota(r) \not\in I$ and $G(R)/I$ is finite. Let $\pi: G(R) \to G(R)/I$ be the natural quotient map. Since $\iota(r)^2 - \iota(r) \not\in I$, we have $\pi(\iota(r))^2 \neq \pi(\iota(r))$. Now consider the subset $$T := \pi\big(\iota(R)\big) \subseteq G(R)/I.$$ This is a finite commutative multiplicative semigroup, because $G(R)/I$ is finite. Moreover, $T$ is divisible. Indeed, if $t\in T$, then $t = \pi(\iota(s))$ for some $s \in R$. Since $R$ is multiplicatively divisible, for every $n \geqslant 1$ there exists $q \in R$ such that $s = q^n$. Therefore, $$t = \pi\big(\iota(s)\big) = \pi\big(\iota(q^n)\big) = \pi\big(\iota(q)\big)^n.$$ By Remark \ref{finite_divisible_semigroup_is_a_finite_band}, $T$ is a band. Therefore, every element of $T$ is idempotent. In particular, $\pi(\iota(r))^2 = \pi(\iota(r))$, which leads to a contradiction. Thus, $R$ is multiplicatively idempotent. 
\end{proof}
\subsection{The multiplicatively cancellative case} 
\begin{prop}\label{finitely_generated_commutative_multiplicatively_divisible_multiplicatively_cancellative_semiring_is_trivial}
    Let $R$ be a finitely generated commutative semiring. If $R$ is multiplicatively divisible and multiplicatively cancellative, then $R$ is trivial.  
\end{prop}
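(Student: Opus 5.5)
Since $(R,\cdot)$ is commutative, cancellative and divisible, the plan is to show that $(R,\cdot)$ is a group, then that this group carries a lattice order for which $R$ becomes a finitely generated divisible abelian $\ell$-group, and finally to apply Lemma \ref{divisible_abelian_finitely_generated_l_group_is_trivial}.

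\textbf{Step 1: an identity and idempotents.} First I find an identity. Take any $a\in R$ and write $a=b^2$. I would like to avoid producing an idempotent by hand, and instead argue through Remark \ref{finite_divisible_semigroup_is_a_finite_band}. The multiplicative semigroup need not be finitely generated, so the Remark does not apply directly. Instead I would use cancellativity more cleverly: if some idempotent $e$ exists, then $ex=ex\cdot e$ gives $x=xe$, so $e=1$. In that case $1$ is the unique idempotent.

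\textbf{Step 2: additive idempotence and the group structure.} Next I try to show that $R$ is additively idempotent. Write $2=1+1$ if $1$ exists; by divisibility there is $q$ with $q^2=1+1$. A cleaner route is to embed $R$ into its group of quotients $G=\{x/y\}$, which is an abelian group, and extend addition by $x/y+z/w=(xw+zy)/(yw)$. This makes $G$ a semifield (a parasemifield) containing $R$. Divisibility of $R$ gives that every element of $R$, hence every element of $G$ as a quotient, is divisible in $G$. Moreover $G$ is generated as a semifield by the generators of $R$.

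\textbf{Step 3: reduction to the additively idempotent case.} The main obstacle is handling parasemifields that are not additively idempotent, for example $\mathbb{Q}^+$. If $G$ were additively cancellative, then $R$ would be additively cancellative. By Proposition \ref{finitely_generated_commutative_multiplicatively_divisible_additively_cancellative_semiring_is_multiplicatively_idempotent}, $R$ would then be idempotent, and cancellativity forces $R=\{1\}$. Otherwise I would pass to a quotient: the map $x\mapsto$ (the class under the congruence $x\sim y \iff x+y\cdot$ stuff) fails to be a clean reduction, so instead I would cite the known fact that a finitely generated parasemifield is additively idempotent (Kala--Kepka-type results). I would rather prove directly that $1+1=1$ fails to be needed. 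This step is the one I am least sure of and would treat as the crux.

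\textbf{Step 4: the $\ell$-group argument.} Granting additive idempotence, $G$ with the order $x\le y\iff x+y=y$ is a lattice-ordered abelian group. Here joins are given by $+$ and meets by $x\wedge y=(x^{-1}+y^{-1})^{-1}$. $G$ is finitely generated as an $\ell$-group by the generators of $R$, and it is divisible. Lemma \ref{divisible_abelian_finitely_generated_l_group_is_trivial} then gives $G$ trivial, hence $R$ trivial.
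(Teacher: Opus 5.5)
Your Steps 2 and 4 are sound and agree with the paper. $R$ embeds in its group of quotients $G$, and addition extends to make $G$ a divisible commutative parasemifield. If $G$ is additively idempotent, it is a divisible abelian $\ell$-group, finitely generated by the generators of $R$, hence trivial by Lemma~\ref{divisible_abelian_finitely_generated_l_group_is_trivial}. Step 1 is unnecessary, since the identity lives in $G$.

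\textbf{The gap is Step 3, which is where the content of the proof lies.} The split ``$G$ additively cancellative or not'' does not cover the problem, because a parasemifield is in general neither additively cancellative nor additively idempotent. The fact you propose to cite is false in the form you need: $\mathbb{Q}^+$ is generated as a parasemifield by $1$ and is not additively idempotent. The idempotence results of this kind (Kala--Korbel\'a\v{r}) are for parasemifields finitely generated \emph{as semirings}. Your $G$ is only known to be finitely generated once inversion is allowed, so they do not apply. Moreover, even if you found some additively idempotent image of $G$, Step 4 would only show that this image is trivial. You would still need a way to carry triviality back to $G$ itself.

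\textbf{How the paper closes the gap.} First, instead of assuming $G$ is additively cancellative, pass to $G/\alpha$, where $x\,\alpha\,y$ iff $x+c=y+c$ for some $c\in G$. This quotient is additively cancellative. The image of $R$ in it is finitely generated, multiplicatively divisible and additively cancellative. By Proposition~\ref{finitely_generated_commutative_multiplicatively_divisible_additively_cancellative_semiring_is_multiplicatively_idempotent} it consists of idempotents of the group $(G/\alpha,\cdot)$, which forces $\alpha=G\times G$. In particular $1+c=(1+1)+c$ for some $c$. So $f(x)=1+x$ has a fixed point, and $P=\{x\in G\mid 1+x=x\}$ is non-empty.

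Second, put $J=\{x\mid Px\subseteq P\}$ and $I=\{x\mid Px=P\}$. One checks that $f(G)\subseteq J$, and that $xI\leqslant yI\iff yx^{-1}\in J$ makes $G/I$ an abelian $\ell$-group with $xI\vee yI=(x+y)I$. This is the additively idempotent image to which your Step 4 applies, so $G/I$ is trivial. Then $I=G$ gives $P=PG=G$, i.e.\ $1+x=x$ for all $x$, whence $x=x(1+x^{-1})=xx^{-1}=1$. The non-empty fixed-point set $P$ is what both produces the idempotent quotient and lets you transfer its triviality back to $G$. Your proposal has no substitute for it.
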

\begin{proof}
    Since the reduct $(R, \cdot)$ is a cancellative commutative semigroup, we apply Construction \ref{construction_of_completion_group} to obtain the abelian group $(G(R), \cdot)$ and the injective semigroup homomorphism $\iota: R \to G(R)$. Note that now the group operation of $G(R)$ is written multiplicatively. From now on, we identify $R$ with $\iota(R)\subseteq G(R)$. Thus, multiplication in $R$ is the restriction of the group operation of $G(R)$. Now we extend the semiring addition $+$ from $R$ to $G(R)$. Let $x, y \in G(R)$. Then $x = rs^{-1}$ and $y = tu^{-1}$ for some $r, s, t, u \in R$. We have $xsu = ru \in R$ and $ysu = ts\in R$. Define $$x + y := (xsu + ysu)(su)^{-1}.$$ More generally, if $w \in R$ satisfies $xw, yw \in R$, then $x + y = (xw + yw)w^{-1}$. The value of $x + y$ is independent of the chosen $w$. Indeed, if $w, v \in R$ satisfy $xw, yw, xv, yv \in R$, then after multiplying $(xw + yw)w^{-1}$ and $(xv + yv)v^{-1}$ by $wv$ we get $$(xw + yw)v = xwv + ywv$$ and $$(xv + yv)w = xvw + yvw,$$ which are equal by commutativity of multiplication in $G(R)$. Since $G(R)$ is a group, cancellation gives $$(xw + yw)w^{-1} = (xv + yv)v^{-1}.$$ Thus, the extended operation is well defined. Moreover, it can be easily verified that the extended operation is associative and commutative, coincides with the original addition on $R$, and satisfies 
    \begin{equation}\label{distributivity_property}
        (x + y)z = (xz) + (yz)  
    \end{equation}
    for all $x, y, z \in G(R)$.
    We check only distributivity. Write $x = rs^{-1}$, $y = tu^{-1}$, $z = wv^{-1}$ for some $r, s, t, u, w, v \in R$ and put $h := suv$. Then $xh, yh, zh, xzh, yzh \in R$. Multiplying both sides of \eqref{distributivity_property} by $h^2$, the left-hand side becomes $$\big((x + y)z\big)h^2 = \big((x + y)h\big)(zh) = (xh + yh)(zh) = xzh^2 + yzh^2,$$ and the right-hand side becomes $$\big((xz) + (yz)\big)h^2 = (xzh + yzh)h = xzh^2 + yzh^2.$$ Using cancellation, we get \eqref{distributivity_property}, as required. Hence $G(R)$ is a commutative parasemifield. 

    Now define an equivalence relation $\alpha\subseteq G(R) \times G(R)$ by $$(x, y)\in\alpha \iff x + c = y + c$$ for some $c\in G(R)$. It is compatible with both semiring operations of $G(R)$ and in consequence $G(R)/\alpha$ is a commutative semiring. We claim that the addition in $G(R)/\alpha$ is cancellative. Indeed, if $[x] + [z] = [y] + [z]$ in $G(R)/\alpha$, then $(x + z, y + z)\in\alpha$, so $(x + z) + c = (y + z) + c$ for some $c \in G(R)$. Hence $x + (z + c) = y + (z + c)$, and therefore $(x, y)\in\alpha$. 
    
    Let $\overline{R}$ be the image of $R$ in $G(R)/\alpha$. Then $\overline{R}$ is finitely generated, multiplicatively divisible and additively cancellative. By Proposition \ref{finitely_generated_commutative_multiplicatively_divisible_additively_cancellative_semiring_is_multiplicatively_idempotent}, $\overline{R}$ is multiplicatively idempotent. But multiplication in $G(R)/\alpha$ is a group operation, so its only idempotent element is the identity. Hence every element of $\overline{R}$ is the identity of $G(R)/\alpha$. Since every element of $G(R)$ is of the form $rs^{-1}$, where $r, s\in R$, it follows that $G(R)/\alpha$ is trivial. Thus, $\alpha = G(R) \times G(R)$. 
    
    We claim that $G(R)$ is trivial. Define the map $$f: G(R) \to G(R), \quad f(x) := 1 + x.$$ Since $\alpha = G(R)\times G(R)$, there exists $c \in G(R)$ such that $1 + c = (1 + 1) + c$. Thus, we have $f(f(c)) = f(c)$, so the set $$P := \{x \in G(R) \mid f(x) = x\}$$ is non-empty. Define $$J := \{x \in G(R) \mid Px \subseteq P\}, \quad I := J \cap J^{-1} = \{x \in G(R) \mid Px = P\},$$ where $J^{-1} := \{x^{-1} \mid x \in J\}$. Then $J$ is a submonoid of $G(R)$ and $I$ is a subgroup of $G(R)$. Note that $f(G(R))\subseteq J$. Indeed, if $p\in P$ and $x\in G(R)$, then $$pf(x) = p + px = (1 + p) + px = 1 + (p + px) = 1 + pf(x).$$ Thus, $pf(x)\in P$, so $f(x) \in J$. 

    Now define a partial order on $G(R)/I$ by $$xI \leqslant yI \iff yx^{-1}\in J.$$ This is well defined. Indeed, since $I^{-1} = I \subseteq J$, for all $i, k\in I$ we have $$xI \leqslant yI \iff yx^{-1}\in J \iff yx^{-1}ki^{-1} \in J\iff (xi)I \leqslant (yk)I.$$ Moreover, this is translation-invariant: if $xI \leqslant yI$, then for any $z \in G(R)$, $$(yz)(xz)^{-1} = yzz^{-1}x^{-1} = yx^{-1} \in J$$ and in consequence $(xz)I \leqslant (yz)I$. Now we prove that $G(R)/I$ is an $\ell$-group. For $x\in G(R)$, the join of $I$ and $xI$ is $f(x)I$. Indeed, we have $f(x)\in J$, so $I \leqslant f(x)I$. Moreover, $f(x)x^{-1} = f(x^{-1})\in J$ and thus, $xI \leqslant f(x)I$. Now let $uI$ be any upper bound of $I$ and $xI$. Then we have $u, ux^{-1}\in J$. Let $p\in P$ and put $q := pux^{-1}$. It is easy to see that $q\in P$ and $qx = pu \in P$. Since $q \in P$, we get  $1 + q = q$ and multiplying both sides by $x$ gives $x + qx = qx$. Since $qx\in P$, we also have $1 + qx = qx$. Therefore, $$f(x)f\big(f(x)^{-1}qx\big) = f(x) + qx = (1 + x) + qx = 1 + (x + qx) = 1 + qx = qx$$ and multiplying both sides by $f(x)^{-1}$ on the left gives $$f\big(f(x)^{-1}qx\big) = f(x)^{-1}qx.$$ Then $f(x)^{-1}qx \in P$. Since $G(R)$ is abelian, $$puf(x)^{-1} = f(x)^{-1}qx \in P.$$ But $p\in P$ was chosen arbitrarily, so $uf(x)^{-1}\in J$. Therefore, $f(x)I \leqslant uI$ and in consequence $I \lor xI = f(x)I$, as required. By translation invariance of the order on $G(R)/I$, we obtain $$xI \lor yI = xI \lor xx^{-1}yI = x(I \lor x^{-1}yI) = xf(x^{-1}y)I = (x + y)I$$ and meets are given by $$xI \land yI = (x^{-1}I\lor y^{-1}I)^{-1}.$$ Thus, $G(R)/I$ is an abelian $\ell$-group. 

    Let $x_1, \ldots, x_k$ be the generators of semiring $R$. Their images generate $G(R)/I$ as an $\ell$-group. Moreover, $G(R)/I$ is divisible. Indeed, let $xI \in G(R)/I$ and let $n \geqslant 1$. Since each element of $G(R)$ can be written in the form $rs^{-1}$, where $r, s\in R$, we have $x = tu^{-1}$ for some $t, u\in R$. By the multiplicative divisibility of $R$, there exist $w, v\in R$ such that $t = w^n$ and $u = v^n$. Thus, $$(wv^{-1}I)^n = (wv^{-1})^nI = w^n(v^n)^{-1}I = tu^{-1}I = xI,$$ which proves divisibility of $G(R)/I$. By Lemma \ref{divisible_abelian_finitely_generated_l_group_is_trivial}, the group $G(R)/I$ is trivial. Thus, $I = G(R)$. Since $P \neq \varnothing$ and $I = \{x \in G(R) \mid Px = P\}$, we get $PG(R) = PI = P$ and $PG(R) = G(R)$. Therefore, $P = G(R)$ and in consequence $f(x) = x$ for every $x \in G(R)$. Then $$x = f(x) = x\big(f(x^{-1})\big) = xx^{-1} = 1,$$ so $G(R)$ is trivial. Since the map $\iota: R \to G(R)$ is injective, the semiring $R$ must also be trivial. 
\end{proof}
\subsection{Minimal non-idempotent quotients}
\begin{lemma}\label{finitely_generated_commutative_multiplicatively_idempotent_semiring_is_finite}
    Let $R$ be a finitely generated commutative semiring. If $R$ is multiplicatively idempotent, then $R$ is finite.
\end{lemma}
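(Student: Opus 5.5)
The plan is to show that every element of $R$ is a polynomial expression of bounded shape in the generators, using multiplicative idempotence and commutativity to control products and then a separate argument to control sums. Let $x_1,\ldots,x_k$ generate $R$. Since $(R,\cdot)$ is a commutative band, every product of generators equals a product $x_F := \prod_{i\in F} x_i$ for some non-empty $F\subseteq\{1,\ldots,k\}$. There are at most $2^k-1$ such monomials. By distributivity, every element of $R$ is a finite sum $\sum_F n_F x_F$ with $n_F\geqslant 0$ not all zero, where $n x$ denotes the $n$-fold sum. It therefore suffices to show that the coefficients $n_F$ can be bounded.

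The key step is to bound the additive order of elements. For any $a\in R$, idempotence gives
\[
a + a = (a+a)^2 = a^2+a^2+a^2+a^2 = 4a .
\]
Hence $2a = 4a$, and consequently $na$ for $n\geqslant 2$ takes only the values $2a$ and $3a$. In particular $n a \in\{a,2a,3a\}$ for every $n\geqslant 1$. Applying this to each monomial $x_F$ shows that every element of $R$ has the form $\sum_F n_F x_F$ with $n_F\in\{0,1,2,3\}$. This gives $|R|\leqslant 4^{2^k-1}$, so $R$ is finite.

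The only point needing care is the computation $(a+a)^2=4a$. It uses both distributive laws: $(a+a)(a+a)=a(a+a)+a(a+a)=a^2+a^2+a^2+a^2$. I do not expect a real obstacle. Alternatively, one can note that $R$ satisfies the identities of a finitely generated commutative semiring in the locally finite variety defined by $x^2=x$, $xy=yx$ and $2x=4x$. The direct counting argument above already suffices, however.
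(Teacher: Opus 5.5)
Your proof is correct and follows essentially the same route as the paper. The paper likewise notes that commutativity and idempotence leave only finitely many products of generators, derives $2p = (p+p)^2 = 4p$ to make each additive cyclic semigroup finite, and concludes that only finitely many sums occur; your explicit bound $|R| \leqslant 4^{2^k-1}$ just makes that last step quantitative.
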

\begin{proof}
    Let $X$ be a finite generating set of $R$. Since multiplication is commutative and idempotent, there are only finitely many products of elements in $X$. For such a product $p$, we have $$2p = p + p = (p + p)^2 = p^2 + p^2 + p^2 + p^2 = 4p,$$ so the additive cyclic semigroup generated by $p$ is finite. Every element of $R$ is a finite sum of finitely many products of elements in $X$ and only finitely many such sums occur. 
\end{proof}
\begin{definition}
    A semiring $R$ is called \emph{minimal non-idempotent} if $R$ is not multiplicatively idempotent, whereas every proper quotient of $R$ is. 
\end{definition}
\begin{lemma}\label{existence_of_minimal_non_idempotent_quotient}
    Let $R$ be a finitely generated commutative semiring which is not multiplicatively idempotent. Then $R$ has a quotient $T$ which is finitely generated, commutative and minimal non-idempotent.  
\end{lemma}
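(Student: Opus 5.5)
I will produce a minimal non-idempotent quotient by applying Zorn's lemma to the set of congruences whose quotients are \emph{not} multiplicatively idempotent. Let
$$\Sigma := \{\theta\in\mathrm{Con}(R) \mid R/\theta \text{ is not multiplicatively idempotent}\},$$
ordered by inclusion. Since $R$ is not multiplicatively idempotent, $\mathrm{id}_R\in\Sigma$, so $\Sigma\neq\varnothing$.

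The key observation is a finiteness property of the complementary condition. The quotient $R/\theta$ is multiplicatively idempotent if and only if $(x^2,x)\in\theta$ for every $x\in R$. Since $R$ is finitely generated and commutative, I expect this to reduce to finitely many pairs. Indeed, let $\mu$ be the least congruence with $(x^2,x)\in\mu$ for all $x\in R$. By Lemma \ref{finitely_generated_commutative_multiplicatively_idempotent_semiring_is_finite}, $R/\mu$ is finite. Hence $R/\mu$ is finitely presented relative to $R$, and I would like to conclude that $\mu$ is finitely generated as a congruence. This last step is the delicate point, and it is not immediate for semirings.

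I would therefore avoid the finite generation of $\mu$ altogether and argue directly with chains. Let $\mathcal C\subseteq\Sigma$ be a nonempty chain and put $\theta:=\bigcup\mathcal C$. This is a congruence, since a union of a chain of congruences is a congruence. Suppose $\theta\notin\Sigma$, that is, $R/\theta$ is multiplicatively idempotent. Let $x_1,\dots,x_k$ generate $R$. The quotient $R/\theta$ is finite by Lemma \ref{finitely_generated_commutative_multiplicatively_idempotent_semiring_is_finite}, and in fact the proof of that lemma uses only finitely many relations. These are $(x_i^2,x_i)$ together with $(2p,4p)$ for the finitely many products $p$ of distinct generators. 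Each of these pairs lies in $\theta$, so all of them lie in a single member $\theta_0\in\mathcal C$.

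The main obstacle is then to show that these finitely many relations already force $R/\theta_0$ to be multiplicatively idempotent, which would contradict $\theta_0\in\Sigma$. In a commutative semiring, $(x_i^2,x_i)\in\theta_0$ for all generators makes every monomial idempotent modulo $\theta_0$. For a sum $s=\sum p_j$ of monomials we have $s^2=\sum_j p_j+\sum_{j<l}2p_jp_l$. So I would additionally include in the finite list the relations $(2q+ q', \,q')$-type identities needed to reduce $s^2$ to $s$, in particular $(p+2pq+q,\,p+q)$ for the finitely many pairs of monomials $p,q$. These come from $(p+q)^2=p+q$ in $R/\theta$. Since there are only finitely many monomials, and the relation $2p=4p$ bounds the coefficients, finitely many such identities should suffice by induction on the number of summands. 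I expect to need care here, and the fallback is the following. Since $R/\theta$ is finite, $R/\theta$ is a quotient of $R$ by finitely many identifications among the finitely many ``normal forms'' (sums with bounded coefficients of the finitely many monomials), and all of those identifications are already present in some $\theta_0$.

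Once chains are handled, Zorn's lemma yields a maximal $\theta\in\Sigma$. Put $T:=R/\theta$. Then $T$ is commutative, finitely generated (by the images of the $x_i$), and not multiplicatively idempotent. Every proper quotient of $T$ corresponds to a congruence strictly containing $\theta$, which by maximality is not in $\Sigma$, so that quotient is idempotent. Hence $T$ is minimal non-idempotent.
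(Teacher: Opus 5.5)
Your argument is the paper's: apply Zorn's lemma to the congruences with non-idempotent quotient, and handle a chain by showing that idempotence of $R/\bigcup\mathcal C$ is already forced by finitely many pairs, which then lie in a single member of the chain. The step you call delicate is exactly what the paper cites from Rival--Sands (in a finitely generated algebra, a congruence with finite quotient is finitely generated, so $\kappa=\langle\{(x,x^2)\mid x\in R\}\rangle$ is compact), and it is not semiring-specific; your fallback is the standard proof of that fact, and your primary route also closes once written out: with $M$ the finite set of products of distinct generators, the pairs $(x_i^2,x_i)$ and $((p+q)^2,p+q)$ for $p,q\in M$ generate a congruence $\phi$ (the case $p=q$ gives $4p\mathrel{\phi}2p$) in which every $s=\sum_j c_jm_j$ with $c_j\geqslant 1$ and distinct $m_j\in M$ satisfies $s^2\mathrel{\phi}s+\sum_{j<l}2m_jm_l\mathrel{\phi}s$, because $m_j+m_l+2m_jm_l\mathrel{\phi}(m_j+m_l)^2\mathrel{\phi}m_j+m_l$.
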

\begin{proof}
    Let $$\kappa := \Big\langle\big\{(x, x^2) \mid x \in R\big\}\Big\rangle.$$ Then $R/\kappa$ is multiplicatively idempotent and $\kappa$ is the least congruence on $R$ with this property. Since $R/\kappa$ is again finitely generated and commutative, Lemma \ref{finitely_generated_commutative_multiplicatively_idempotent_semiring_is_finite} implies that $R/\kappa$ is finite. Since $R$ is finitely generated and $R/\kappa$ is finite, the congruence $\kappa$ is compact in $\mathrm{Con}(R)$ \cite[Theorem 1]{RivalSands}. Now consider the set $$\mathcal{A} := \big\{\theta\in\mathrm{Con}(R) \mid R/\theta \text{ is not multiplicatively idempotent}\big\}$$ ordered by inclusion. This set is non-empty because $\mathrm{id}_{R}\in \mathcal{A}$. Let $(\theta_i)_{i\in I}$ be a chain in $\mathcal{A}$. Put $\theta := \bigcup_{i\in I}\theta_i$. Then $\theta\subseteq R \times R$ is a congruence. We claim that $R/\theta$ is not multiplicatively idempotent. Assume towards a contradiction that $R/\theta$ is multiplicatively idempotent. Then $\kappa \subseteq \theta$ and since $\kappa$ is compact in $\mathrm{Con}(R)$, we have $\kappa \subseteq \theta_i$ for some $i\in I$. Therefore, $R/\theta_i$ is multiplicatively idempotent, contradicting $\theta_i\in \mathcal{A}$. We conclude that $R/\theta$ is not multiplicatively idempotent, so $\theta\in\mathcal{A}$. Thus, every chain in $\mathcal{A}$ has an upper bound in $\mathcal{A}$. By Kuratowski-Zorn Lemma, the set $\mathcal{A}$ has a maximal element $\overline{\theta}$. Put $T := R/\overline{\theta}$. Then $T$ is finitely generated, commutative and not multiplicatively idempotent. Moreover, the maximality of $\overline{\theta}$ implies that every proper quotient of $T$ is multiplicatively idempotent. Hence $T$ is minimal non-idempotent. 
\end{proof}

\subsection{Multiplicatively cancellative and non-cancellative elements}
Let $R$ be a commutative semiring. For $r\in R$, define  $$\Theta_R(r) := \big\{(x, y) \in R\times R\mid rx = ry\big\}.$$ This is the equivalence relation induced by the multiplication map $$m_r : R \to R, \quad x \mapsto rx.$$ Although $m_r$ need not be a semiring homomorphism, the relation $\Theta_R(r)$ is a semiring congruence. We also define $$N_R := \big\{r \in R\mid \Theta_R(r) \neq \mathrm{id}_R\big\}, \quad C_R := R\setminus N_R.$$ Equivalently, $N_R$ consists of those elements $r \in R$ for which the multiplication map $m_r$ is not injective, while $C_R$ consists of those elements $r\in R$ for which the map $m_r$ is injective. Thus, $C_R$ is the set of multiplicatively cancellative elements of $R$. 

\begin{prop}\label{basic_properties_of_N_R_and_C_R}
    Let $(R, +, \cdot)$ be a commutative semiring. 
    \begin{enumerate}[label=(\arabic*)]
        \item If $R$ is minimal non-idempotent, then $N_R$ is either empty or an ideal of $R$. Moreover, $$rs\in N_R \implies r \in N_R\text{ or } s\in N_R$$ for all $r, s \in R$. In particular, if $N_R \neq \varnothing$ and $C_R \neq \varnothing$, then $N_R$ is a prime ideal of $R$. 
        \item If $R$ is multiplicatively divisible and $C_R \neq \varnothing$, then $(C_R, \cdot)$ is a divisible commutative semigroup.
    \end{enumerate}
\end{prop}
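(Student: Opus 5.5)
The whole argument rests on one reformulation of $N_R$. Suppose $R$ is minimal non-idempotent and $r \in N_R$. Then $\Theta_R(r)$ is a congruence different from $\mathrm{id}_R$, so $R/\Theta_R(r)$ is a proper quotient of $R$ and is therefore multiplicatively idempotent. That means $(x^2, x) \in \Theta_R(r)$, i.e.\ $rx^2 = rx$, for every $x \in R$. Conversely, since $R$ is not multiplicatively idempotent, we can fix $z \in R$ with $z^2 \neq z$. Any $r$ with $rz^2 = rz$ then has $(z, z^2) \in \Theta_R(r)$ with $z \neq z^2$, so $r \in N_R$. Hence I will first establish
$$N_R = \{r \in R \mid rx = rx^2 \text{ for every } x\in R\} = \{r \in R \mid rz = rz^2\},$$
where $z$ is the fixed element with $z \ne z^2$. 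This description carries part (1).

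\emph{Ideal property.} Let $r, t \in N_R$ and $s \in R$.
\begin{itemize}
\item For sums, $(r+t)z = rz + tz = rz^2 + tz^2 = (r+t)z^2$, so $r + t \in N_R$.
\item For products, the general fact that $rx = ry$ implies $(sr)x = (sr)y$ gives $\Theta_R(r) \subseteq \Theta_R(sr)$, so $sr \in N_R$. Commutativity handles both sides.
\end{itemize}
So $N_R$, if nonempty, is an ideal.

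\emph{Primeness.} Suppose $rs \in N_R$ but $r, s \in C_R$. Then $r(sz) = r(sz^2)$. Injectivity of $m_r$ gives $sz = sz^2$, and injectivity of $m_s$ gives $z = z^2$, a contradiction. Hence $r \in N_R$ or $s \in N_R$. If additionally $N_R \ne \varnothing$ and $C_R \neq \varnothing$, then $N_R$ is a proper nonempty ideal with this property, i.e.\ a prime ideal.

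\emph{Part (2).}
\begin{itemize}
\item $C_R$ is closed under multiplication, because $m_{rs} = m_r \circ m_s$ is a composition of injective maps. So $(C_R, \cdot)$ is a commutative subsemigroup.
\item For divisibility, take $x \in C_R$ and $n \geqslant 1$. Multiplicative divisibility of $R$ gives $y \in R$ with $y^n = x$. Then $m_x = m_y^{\,n}$ is injective, which forces $m_y$ to be injective. Hence $y \in C_R$, so every element of $C_R$ has an $n$-th root inside $C_R$.
\end{itemize}

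The only step requiring an idea is closure of $N_R$ under addition. A direct attempt to intersect $\Theta_R(r)$ and $\Theta_R(t)$ gives no control, and the characterization through the single witness $z$ is what resolves it.
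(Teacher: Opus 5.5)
Your proof is correct and follows essentially the same route as the paper: the paper also fixes a single element $u$ with $u^2\neq u$, observes that $(u,u^2)$ lies in both $\Theta_R(a)$ and $\Theta_R(b)$ to get closure under addition, and handles absorption, primeness and part (2) by the same injectivity-of-$m_r$ reasoning. The only cosmetic difference is that for primeness the paper uses $m_{rs}=m_r\circ m_s$ directly, which needs no minimality, whereas you route the argument through the witness $z$.
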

\begin{proof}
    $(1)$ Let $N_R \neq \varnothing$. We first prove that $N_R$ is closed under addition. Let $a, b\in N_R$. Then $\Theta_R(a) \neq \mathrm{id}_R$ and $\Theta_R(b) \neq \mathrm{id}_R$. Since $R$ is minimal non-idempotent, there exists $u\in R$ such that $u^2 \neq u$ and the quotients $R/\Theta_R(a)$, $R/\Theta_R(b)$ are multiplicatively idempotent. Thus, $(u, u^2)$ belongs to both $\Theta_R(a)$ and $\Theta_R(b)$, so $au = au^2$ and $bu = bu^2$. Therefore, $$(a + b)u = au + bu = au^2 + bu^2 = (a + b)u^2.$$ Since $u \neq u^2$, the multiplication map $m_{a + b}$ is not injective and in consequence $a + b \in N_R$. 

    Now let $a \in N_R$ and $r \in R$. There exist distinct elements $x, y\in R$ such that $ax = ay$. Multiplying this equality by $r$ and using associativity, we get $(ra)x = (ra)y$ and hence $ra \in N_R$. Therefore, $N_R$ is an ideal of $R$. Now fix $r, s\in R$ and suppose that $rs\in N_R$. If both $r, s$ belonged to $C_R$, then multiplication maps $m_r, m_s$ would be injective and in consequence $m_{rs} = m_r \circ m_s$ would also be injective. This is a contradiction with $rs \in N_R$. Thus, $r\in N_R$ or $s\in N_R$. In particular, if $N_R \neq \varnothing$ and $C_R \neq \varnothing$, then $N_R$ is a non-empty proper subset of $R$ and in consequence it is a prime ideal, as required. \\
    $(2)$ Clearly $C_R$ is closed under multiplication. Let $a\in C_R$ and let $n\geqslant 1$. Since $R$~is multiplicatively divisible, there exists $r\in R$ such that $r^n = a$. If $r \in N_R$, then $r^n \in N_R$, which leads to a contradiction with $a \in C_R$. Hence we conclude that $r \in C_R = R\setminus N_R$. 
\end{proof}

\begin{lemma}\label{every_element_in_N_R_is_multiplicatively_idempotent}
    Let $R$ be a commutative, multiplicatively divisible, minimal non-idempotent semiring. Then for all $a\in N_R$, $b\in C_R$, $$a^2 = a, \quad ba = a.$$ 
\end{lemma}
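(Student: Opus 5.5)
The plan is to exploit the minimality of $R$ through the congruences $\Theta_R(\cdot)$. The key observation is that for any $c\in N_R$ the congruence $\Theta_R(c)$ is not the equality congruence. Hence $R/\Theta_R(c)$ is a proper quotient of $R$, and it is multiplicatively idempotent because $R$ is minimal non-idempotent. Concretely, $(x,x^2)\in\Theta_R(c)$ for every $x\in R$, which means
$$cx = cx^2 \quad\text{for all } x\in R,\ c\in N_R. \tag{$\ast$}$$
Both claims then follow from $(\ast)$ by short manipulations.

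\emph{The identity $ba=a$.} Let $a\in N_R$ and $b\in C_R$. Apply $(\ast)$ with $c=a$ and $x=b$ to get $ab=ab^2$. Rewrite this as $b\cdot a = b\cdot(ab)$, using commutativity. Since $b\in C_R$, the map $m_b$ is injective, so $a=ab=ba$.

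\emph{The identity $a^2=a$.} Here we use multiplicative divisibility. Choose $c\in R$ with $c^2=a$. If $c$ were in $C_R$, then $m_a=m_c\circ m_c$ would be injective, contradicting $a\in N_R$. Thus $c\in N_R$; alternatively, this follows from the primeness-type property in Proposition~\ref{basic_properties_of_N_R_and_C_R}(1), or directly from the fact that $N_R$ absorbs products. Applying $(\ast)$ to $c$ with $x=c$ gives $c^2=c^3$. Consequently $c^4=c\cdot c^3=c\cdot c^2=c^3=c^2$, and so $a^2=c^4=c^2=a$.

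No step looks like a serious obstacle. The only point needing care is to check that minimality is applied correctly: $\Theta_R(c)\neq\mathrm{id}_R$ is exactly the condition that makes $R/\Theta_R(c)$ a \emph{proper} quotient, and the paper has already noted that $\Theta_R(c)$ is a semiring congruence. The other point to verify is that the square root of $a$ lands in $N_R$ rather than $C_R$, which is the short injectivity argument above.
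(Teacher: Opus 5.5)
Your proof is correct and follows the paper's argument essentially verbatim. You take a square root $c$ of $a$ and show $c\in N_R$ via non-injectivity of $m_c\circ m_c$; the paper cites Proposition~\ref{basic_properties_of_N_R_and_C_R}(1) for this step, but that rests on the same observation. You then use minimality on $\Theta_R(c)$ to get $c^2=c^3$, and obtain $ba=a$ by cancelling $b$ in $ab=ab^2$, exactly as the paper does.
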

\begin{proof}
    If $N_R = \varnothing$, then the lemma is vacuously true. Assume now that $N_R\neq \varnothing$ and let $a \in N_R$. Since $R$ is multiplicatively divisible, there exists an element $r\in R$ such that $r^2 = a \in N_R$. By Proposition \ref{basic_properties_of_N_R_and_C_R}(1), we have $r\in N_R$ and in consequence $\Theta_R(r) \neq \mathrm{id}_R$. Since $R$ is minimal non-idempotent, $R/\Theta_R(r)$ is multiplicatively idempotent. We obtain $(r, r^2)\in\Theta_R(r)$ and hence $r^2 = r^3$. Multiplying this equality by $r$, we get $r^3 = r^4$. Therefore, $$a = r^2 = r^3 = r^4 = a^2.$$

    Now let $a\in N_R$ and $b \in C_R$. Then $\Theta_R(a)$ is non-trivial, $R/\Theta_R(a)$ is multiplicatively idempotent and in consequence $(b, b^2)\in\Theta_R(a)$. We get $ab = ab^2$, so $ba = b(ba)$. Since $b\in C_R$, the equality $a = ba$ holds. 
\end{proof}
\begin{lemma}\label{N_R_is_finite}
    Let $R$ be a finitely generated, commutative, multiplicatively divisible, minimal non-idempotent semiring. Then the set $N_R$ is finite. 
\end{lemma}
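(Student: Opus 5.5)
We have to show that $N_R$ is finite. We may assume $N_R\neq\varnothing$, since otherwise there is nothing to prove. The plan is to show that every element of $N_R$ can be written as a sum of finitely many idempotent monomials in the generators. Commutativity and idempotence then leave only finitely many possibilities, much as in Lemma \ref{finitely_generated_commutative_multiplicatively_idempotent_semiring_is_finite}.

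Let $x_1,\dots,x_k$ generate $R$. Split the generators into those lying in $N_R$, say $a_1,\dots,a_m$, and those lying in $C_R$, say $b_1,\dots,b_\ell$. Every element of $R$ is a finite sum of monomials $x_{i_1}\cdots x_{i_s}$.

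Take a monomial that contains at least one generator $a_j\in N_R$. By Lemma \ref{every_element_in_N_R_is_multiplicatively_idempotent}, the identities $ba=a$ for $b\in C_R$ and $a^2=a$ for $a\in N_R$ let us delete every $C_R$-factor. Indeed, the product of the $N_R$-factors already lies in $N_R$, because $N_R$ is an ideal by Proposition \ref{basic_properties_of_N_R_and_C_R}(1). We can also collapse repeated $N_R$-factors. So every such monomial equals one of the finitely many products $\prod_{j\in S}a_j$ with $\varnothing\neq S\subseteq\{1,\dots,m\}$, and each of these is an idempotent in $N_R$.

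A monomial using only $C_R$-generators lies in $C_R$, since $C_R$ is closed under multiplication. So every $r\in R$ can be written as $r=c+n$ or $r=c$ or $r=n$. Here $c$ is a sum of $C_R$-monomials and $n$ is a sum of idempotent products $p_S:=\prod_{j\in S}a_j$.

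The main obstacle is controlling elements of $N_R$ whose expression contains a nonzero $C_R$-part $c$. To handle it, I would first show that the sums $n$ of the $p_S$ form a finite set. As in Lemma \ref{finitely_generated_commutative_multiplicatively_idempotent_semiring_is_finite}, each $p_S$ satisfies $2p_S=(2p_S)^2$ because $2p_S\in N_R$ is idempotent. This gives $2p_S=4p_S$, so only finitely many such sums $n$ exist.

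Next, for $r=c+n\in N_R$ (or $r=c\in N_R$), the element $r$ is idempotent by Lemma \ref{every_element_in_N_R_is_multiplicatively_idempotent}. I would try to use $r=r\cdot r$, together with multiplication by a suitable element of $N_R$, to absorb the $C_R$-part. The product $p:=a_1\cdots a_m$, if $m\ge1$, satisfies $pb=p$ for all $b\in C_R$ and $pa_j=p$. Hence $pr$ is a fixed element computed from the multiplicities, which is again finitely controlled.

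If this route fails, I would fall back on the quotient $R/\Theta_R(a)$ for $a\in N_R$. This quotient is multiplicatively idempotent and finitely generated, hence finite by Lemma \ref{finitely_generated_commutative_multiplicatively_idempotent_semiring_is_finite}. Its image $aR$ is then finite, and one would show $N_R=\bigcup_S p_S R$-type sets. Concretely, each $r\in N_R$ satisfies $r=r^2$, and one would want to write $r$ as $e r$ for some idempotent $e$ built from the $a_j$. Then $r\in\bigcup_S p_SR$, and each $p_SR\cong R/\Theta_R(p_S)$ is finite. So the key step to verify is that every $r\in N_R$ lies in $p_SR$ for some $S$. I would prove this by expanding $r=r^{N}$ for large $N$ and observing that every monomial in the expansion contains an $N_R$-generator, which is forced since $r\in N_R$ is prime-like by Proposition \ref{basic_properties_of_N_R_and_C_R}(1).
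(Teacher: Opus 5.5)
Your reduction of monomials containing an $N_R$-generator to the finitely many idempotents $p_S$ is correct, and so is the finiteness of sums of the $p_S$ via $2p_S=4p_S$. But these only control elements of $N_R$ with no $C_R$-part. The real difficulty is elements of $N_R$ built from sums of $C_R$-monomials, for instance elements of $C_R+C_R$ that fall into $N_R$. If no generator lies in $N_R$ (i.e.\ $m=0$), such elements make up all of $N_R$, and neither of your routes handles them.

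In the first route, $pr$ is one of finitely many multiples of $p$, but $r\mapsto pr$ need not be injective, so this does not bound the number of $r$. Moreover, $p$ does not exist when $m=0$.

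In the fallback route, the key claim that every $r\in N_R$ lies in some $p_SR$ is unsupported, and your justification is false. Primality in Proposition \ref{basic_properties_of_N_R_and_C_R}(1) concerns products, not sums. If $r=c$ is a sum of $C_R$-monomials lying in $N_R$, then $r^N=c^N$ expands into monomials built only from $C_R$-generators, so no $N_R$-factor is forced.

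The missing idea is to use one congruence that works for all $a\in N_R$ simultaneously, instead of covering $N_R$ by finitely many sets $aR$. Each $R/\Theta_R(a)$ is indeed finite, but that alone cannot control infinitely many $a$. Put $\Theta:=\bigcap_{a\in N_R}\Theta_R(a)$; this is the kernel of $r\mapsto m_r|_{N_R}$, which the paper encodes as $\mathrm{Mult}_{N_R}(R)$.

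A non-idempotent $u$ lies in $C_R$ by Lemma \ref{every_element_in_N_R_is_multiplicatively_idempotent}. Hence $au=a=au^2$ for every $a\in N_R$, so $(u,u^2)\in\Theta$ and $\Theta\neq\mathrm{id}_R$. Therefore $R/\Theta$ is a proper quotient, hence multiplicatively idempotent, and it is finite by Lemma \ref{finitely_generated_commutative_multiplicatively_idempotent_semiring_is_finite}.

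Finally, $N_R$ injects into $R/\Theta$. If $r,s\in N_R$ and $(r,s)\in\Theta$, then taking $a=r$ and then $a=s$, together with commutativity, gives $r=r^2=rs=s^2=s$.
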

\begin{proof}
    If $N_R = \varnothing$, then $N_R$ is finite. Assume now that $N_R$ has at least one element. By Proposition \ref{basic_properties_of_N_R_and_C_R}(1), $N_R$ is an ideal of $R$ and hence $m_r(a) = ra \in N_R$ for all $r\in R$ and $a \in N_R$. Thus, the restriction $$\left.m_r\right|_{N_R} : N_R \to N_R, \quad a \mapsto ra$$ is well defined. Let $$\mathrm{Mult}_{N_R}(R) := \big\{\left.m_r\right|_{N_R} \mid r\in R\big\}.$$ We regard $\mathrm{Mult}_{N_R}(R)$ as a semiring with pointwise addition and composition as multiplication. It is easy to see that the map $$\pi : R \to \mathrm{Mult}_{N_R}(R), \quad r \mapsto \left.m_r\right|_{N_R}$$ is a surjective semiring homomorphism. Let $u\in R$ be a non-idempotent element. By Lemma \ref{every_element_in_N_R_is_multiplicatively_idempotent}, we get $u\in C_R$. Since $C_R$ is closed under multiplication, we also have $u^2 \in C_R$. Using again Lemma \ref{every_element_in_N_R_is_multiplicatively_idempotent}, we obtain $$\left.m_u\right|_{N_R} = \left.m_{u^2}\right|_{N_R} = \mathrm{id}_{N_R}.$$ Since $u \neq u^2$, the homomorphism $\pi$ is not injective. Thus, $\mathrm{Mult}_{N_R}(R) \cong R/\ker(\pi)$ is a proper quotient of $R$. Since $R$ is minimal non-idempotent, $\mathrm{Mult}_{N_R}(R)$ is multiplicatively idempotent. Moreover, $\mathrm{Mult}_{N_R}(R)$ is finitely generated. Hence, by Lemma \ref{finitely_generated_commutative_multiplicatively_idempotent_semiring_is_finite}, $\mathrm{Mult}_{N_R}(R)$ is finite. 

    It remains to show that $N_R$ embeds into $\mathrm{Mult}_{N_R}(R)$. Consider the restricted map $\left.\pi\right|_{N_R} : N_R \to \mathrm{Mult}_{N_R}(R)$. Let $a, b\in N_R$. If $\left.m_a\right|_{N_R} = \left.m_b\right|_{N_R}$, then by Lemma \ref{every_element_in_N_R_is_multiplicatively_idempotent}, $$a = a^2 = \left.m_a\right|_{N_R}(a) = \left.m_b\right|_{N_R}(a) = ba = ab = \left.m_a\right|_{N_R}(b) = \left.m_b\right|_{N_R}(b) = b^2 = b.$$ Therefore, the map $\left.\pi\right|_{N_R}$ is injective and in consequence $N_R$ is finite. 
\end{proof}

\begin{prop}\label{every_element_satisfies_one_of_two_alternatives}
Let $R$ be a finitely generated, commutative, multiplicatively divisible, minimal non-idempotent semiring. Then for every element $a\in N_R$ exactly one of the following alternatives holds:   
\begin{enumerate}[label=(\arabic*)]
    \item $C_R + a = \{c\}$, where $c\in N_R$,
    \item $b + a = b$ for every $b\in C_R$.
\end{enumerate}
\end{prop}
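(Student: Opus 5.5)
The plan is to dispose of exclusivity first and then obtain the dichotomy by a congruence argument. For exclusivity, note that $C_R\neq\varnothing$: $R$ is not multiplicatively idempotent, and by Lemma~\ref{every_element_in_N_R_is_multiplicatively_idempotent} every non-idempotent $u$ lies in $C_R$. So if both alternatives held, we would have $b=b+a=c$ for some $b\in C_R$. This is impossible, since $c\in N_R$ and $N_R\cap C_R=\varnothing$.

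Throughout I will use the identities from Lemma~\ref{every_element_in_N_R_is_multiplicatively_idempotent}: $a^2=a$ and $ba=a$ for $b\in C_R$. I will also use that $N_R$ is an ideal (Proposition~\ref{basic_properties_of_N_R_and_C_R}(1)) and that $N_R$ is finite (Lemma~\ref{N_R_is_finite}).

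The main tool I would try is the relation
$$\sigma_a := \{(x,y)\in R\times R \mid x+a=y+a\}.$$
It is clearly an equivalence relation compatible with addition.

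\textbf{Multiplicative compatibility (main obstacle).} Suppose $x+a=y+a$ and $r\in R$.
\begin{itemize}
\item If $r\in C_R$, then $ra=a$, so $rx+a=rx+ra=r(x+a)=r(y+a)=ry+a$.
\item If $r\in N_R$, we only get $rx+ra=ry+ra$, which is not obviously enough.
\end{itemize}
For the second case I would use that $r$ and $a$ are commuting idempotents in the finite ideal $N_R$. Multiplying $x+a=y+a$ by $a$ gives $ax+a=ay+a$, and combining this with $r$ may produce $rx+a=ry+a$. If that fails, I would replace $\sigma_a$ by the congruence generated by it and control the $N_R$-part using finiteness of $N_R$. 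Proving that $\sigma_a$, or a suitable substitute, is a congruence is the step I expect to be hardest.

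\textbf{Case analysis.} Once $\sigma_a$ is known to be a congruence, simplicity-type considerations split into three cases.
\begin{itemize}
\item If $\sigma_a=R\times R$, then all $x+a$ coincide. Hence $C_R+a=\{c\}$ with $c=a+a$, and $c\in N_R$ because $N_R$ is an ideal closed under addition. This is alternative (1).
\item If $\sigma_a$ is a proper non-identity congruence, then $R/\sigma_a$ is multiplicatively idempotent by minimality. So $x+a=x^2+a$ for all $x$, in particular for non-idempotent $u\in C_R$. Multiplying by elements of $C_R$ and using $ba=a$, I would aim to show that $b+a$ is absorbed into $b$, yielding alternative (2). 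Failing that, I would aim to show $\sigma_a$ is universal after all.
\item If $\sigma_a=\mathrm{id}_R$, then translation by $a$ is injective. Comparing $b+a$ with $b+a+a$, where $b(a+a)=a+a$ and $(a+a)^2=a+a$, should force $b+a=b$ for $b\in C_R$.
\end{itemize}
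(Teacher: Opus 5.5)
Your exclusivity argument is correct, and so is your deduction of alternative (1) when $\sigma_a=R\times R$. Beyond that, there is a genuine gap.

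\textbf{The congruence step is missing.} Everything rests on $\sigma_a$ being a congruence, and you leave multiplicative compatibility for $r\in N_R$ unresolved. Your fallback, passing to the congruence generated by $\sigma_a$, does not repair this. Universality of that larger congruence no longer means that all sums $x+a$ coincide, so even your first case would be lost.

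\textbf{The trichotomy does not match the dichotomy.} Even granting the congruence property, your three cases on $\sigma_a$ do not line up with the two alternatives you must prove.
\begin{enumerate}[label=(\roman*)]
\item In the middle case, minimality only yields $b+a=b^2+a$. This holds just as well under alternative (1), where both sides equal $c$, so nothing there pushes you towards (2).
\item In the case $\sigma_a=\mathrm{id}_R$ you need $a+a=a$; then $(b+a)+a=b+a$ and injectivity gives $b+a=b$. The identities you list do not give this. They only give $(a+a)^2=4a=2a$, and injectivity then yields $3a=a$, not $2a=a$.
\end{enumerate}

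\textbf{The missing idea.} Split directly on whether $C_R+a$ meets $N_R$, using $ba=a$ for $b\in C_R$ from Lemma~\ref{every_element_in_N_R_is_multiplicatively_idempotent} together with cancellativity. No congruence is needed.

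Suppose $d+a=:c\in N_R$ for some $d\in C_R$. Then for every $b\in C_R$,
$c=bc=bd+ba=bd+a=db+da=d(b+a)$.
Since $C_R$ is multiplicatively closed and $c\notin C_R$, this forces $b+a\in N_R$. Hence $d(b+a)=b+a$, so $b+a=c$, which is alternative (1).

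Suppose instead $C_R+a\subseteq C_R$, and let $b\in C_R$. The lemma applied to $b+a\in C_R$ gives $a=(b+a)a=ba+a^2=a+a$. Therefore
$(b+a)^2=b^2+3a=b^2+a=b^2+ab=(b+a)b$.
Cancelling $b+a\in C_R$ yields $b+a=b$, which is alternative (2).

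This second case is exactly where $a+a=a$ comes from. It requires knowing that $b+a\in C_R$, and your $\sigma_a$-based case analysis never determines that.
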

\begin{proof}
    Since $R$ is not multiplicatively idempotent, $C_R \neq \varnothing$ by Lemma \ref{every_element_in_N_R_is_multiplicatively_idempotent}. Moreover, $N_R \cap C_R = \varnothing$. Thus, the two alternatives are mutually exclusive. 

    Let $a\in N_R$ and $b\in C_R$. Suppose first that there exists $d\in C_R$ satisfying $d + a =: c \in N_R$. By Lemma \ref{every_element_in_N_R_is_multiplicatively_idempotent}, we have \begin{equation}\label{computations_using_properties_of_N_R_and_C_R}
        c = d + a = b(d + a) = bd + ba = bd + a = db + da = d(b + a).
    \end{equation}
    Since $c\in N_R$, $d\in C_R$ and $C_R$ is closed under multiplication, we get $b + a\in N_R$. Continuing \eqref{computations_using_properties_of_N_R_and_C_R} and using Lemma \ref{every_element_in_N_R_is_multiplicatively_idempotent} again, we obtain $$c = d(b + a) = b + a.$$ Hence $C_R + a = \{c\}$. 

    It remains to handle the case $C_R + a \subseteq C_R$. If $b\in C_R$, then $b + a\in C_R$ and hence $$(b + a)a = a$$ by Lemma \ref{every_element_in_N_R_is_multiplicatively_idempotent}. On the other hand, we have $$(b + a)a = ba + a^2 = a + a,$$ so $a = a + a$. Therefore, $$(b + a)(b + a) = b^2 + 2ab + a^2 = b^2 + 3a = b^2 + a = b^2 + ab = (b + a)b.$$ Since $b + a\in C_R$, the map $m_{b + a}$ is injective and hence $(b + a) = b$. 
\end{proof}

Define $U_R := \{a \in N_R \mid C_R + a \subseteq N_R\}$ and $E_R := N_R\setminus U_R$. 

\begin{corollary}\label{basic_properties_of_U_R_and_E_R}
    Let $R$ be a finitely generated, commutative, multiplicatively divisible, minimal non-idempotent semiring. Then 
    \begin{enumerate}[label=(\arabic*)]
        \item $R + U_R \subseteq U_R$, 
        \item $b + e = b$ for every $b \in C_R$ and $e \in E_R$,
        \item the set $E_R$ is finite, closed under addition and multiplication and every element of $E_R$ is additively idempotent.  
    \end{enumerate}
\end{corollary}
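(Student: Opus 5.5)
Write $R$ for the semiring in the statement. The plan is to read off each item from Proposition \ref{every_element_satisfies_one_of_two_alternatives}, Lemma \ref{every_element_in_N_R_is_multiplicatively_idempotent} and Lemma \ref{N_R_is_finite}. Since $R$ is not multiplicatively idempotent, Lemma \ref{every_element_in_N_R_is_multiplicatively_idempotent} gives $C_R\neq\varnothing$. By Proposition \ref{every_element_satisfies_one_of_two_alternatives}, every $a\in N_R$ satisfies exactly one of two alternatives:
\begin{itemize}
\item alternative (1), in which case $C_R+a\subseteq N_R$ and so $a\in U_R$;
\item alternative (2), in which case $C_R+a=C_R$ pointwise, i.e.\ $b+a=b\in C_R$ for all $b\in C_R$, so $a\notin U_R$ (as $C_R\neq\varnothing$) and hence $a\in E_R$.
\end{itemize}
Thus $U_R$ is exactly the set of elements satisfying (1), and $E_R$ the set satisfying (2). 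This gives item (2) immediately.

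For item (1), take $u\in U_R$ and $r\in R$. If $r\in N_R$, then $r+u\in N_R$ because $N_R$ is an ideal (Proposition \ref{basic_properties_of_N_R_and_C_R}(1)). If $r\in C_R$, then $r+u=c\in N_R$ by alternative (1). So in all cases $r+u\in N_R$. It remains to check that $C_R+(r+u)\subseteq N_R$. For $b\in C_R$, write $b+r+u=(b+r)+u$; the previous argument applied to $b+r$ in place of $r$ shows this lies in $N_R$. Hence $r+u\in U_R$.

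For item (3):
\begin{itemize}
\item \emph{Finiteness.} This follows from $E_R\subseteq N_R$ and Lemma \ref{N_R_is_finite}.
\item \emph{Closure under addition.} For $e,f\in E_R$ and $b\in C_R$ we have $b+(e+f)=(b+e)+f=b+f=b$. Also $e+f\in N_R$, since $N_R$ is an ideal. So $e+f$ satisfies (2) and lies in $E_R$.
\item \emph{Closure under multiplication.} We have $ef\in N_R$. Fix $b\in C_R$. By Lemma \ref{every_element_in_N_R_is_multiplicatively_idempotent}, $be=e$, and hence $b+ef=b+(be)f$. Since $b\in C_R$ is cancellative, I would try to use $b\cdot b^{-}$-type manipulations. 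A cleaner route: multiply $b+f=b$ by $e$ to get $be+ef=be$, i.e.\ $e+ef=e$. Then $b+ef=(b+e)+ef=b+(e+ef)=b+e=b$. So $ef\in E_R$.
\item \emph{Additive idempotence.} For $e\in E_R$, pick $b\in C_R$; then $b+e=b$. Multiplying by $e$ and using $be=e$ and $e^2=e$ gives $e+e=e$.
\end{itemize}
I expect the only mildly delicate point to be the closure of $E_R$ under multiplication, handled by the trick $e+ef=e$ above.
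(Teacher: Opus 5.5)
Your proof is correct and follows the paper's argument essentially step by step. The only cosmetic difference is in the closure of $E_R$ under multiplication: you derive $b+ef=(b+e)+ef=b+(e+ef)=b$ directly from the identity $e+ef=e$, whereas the paper uses the same identity inside a proof by contradiction that assumes $ef\in U_R$. You should also delete the abandoned remark about ``$b\cdot b^{-}$-type manipulations'', since it plays no role in the argument.
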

\begin{proof}
    $(1)$ Let $u\in U_R$ and let $r\in R$. We first show that $r + u \in N_R$. If $r\in N_R$, this follows from the fact that $N_R$ is an ideal of $R$, so in particular $N_R$ is closed under addition. If $r\in C_R$, then $r + u \in N_R$ by the definition of $U_R$. 

    It remains to show that $C_R + (r + u) \subseteq N_R$. Let $b \in C_R$. If $b + r \in N_R$, then $$b + (r + u) = (b + r) + u \in N_R,$$ because $N_R$ is closed under addition. If $b + r\in C_R$, then $$b + (r + u) = (b + r) + u \in N_R,$$ because $u\in U_R$. \\
    $(2)$ It follows clearly from definition of $E_R$ and Proposition \ref{every_element_satisfies_one_of_two_alternatives}. \\
    $(3)$ The set $E_R$ is finite, because $E_R \subseteq N_R$ and $N_R$ is finite by Lemma \ref{N_R_is_finite}. Let $e, f \in E_R$. Since $N_R$ is an ideal of $R$, we have $e + f, ef \in N_R$. Let $b \in C_R$. Using $(2)$ twice gives $$b + (e + f) = (b + e) + f = b + f = b$$ and hence $e + f\in E_R$. Now we show that $E_R$ is closed under multiplication. Suppose, for the sake of contradiction, that $ef \in U_R = N_R\setminus E_R$. By $(2)$, we have $b + f = b$ and multiplying this equality by $e$, we get $$be + ef = be.$$ By Lemma \ref{every_element_in_N_R_is_multiplicatively_idempotent}, we have $be = e$. Therefore, $$e + ef = e$$ and hence $$(b + ef) + e = b + (ef + e) = b + e = b \in C_R.$$ Since $ef \in U_R$ and $b\in C_R$, we have $b + ef \in N_R$. Since also $e \in N_R$ and $N_R$ is closed under addition, it follows that $(b + ef) + e \in N_R$, a contradiction. Thus, $ef \in E_R$, so $E_R$ is closed under multiplication. By $(2)$ we have $b + e = b$ and multiplying this equality by $e$, we get $$be + e^2 = be.$$ Then the equality $e + e = e$ follows from Lemma \ref{every_element_in_N_R_is_multiplicatively_idempotent}. Thus, every element of $E_R$ is additively idempotent. This completes the proof. 
\end{proof}

\subsection{Localization at multiplicatively cancellative elements}

\begin{construction}\label{construction_localization_for_commutative_semiring}
    Let $(R, +, \cdot)$ be a commutative semiring and let $\varnothing \neq D \subseteq R$ be a multiplicative subsemigroup. Assume that every element of $D$ is multiplicatively cancellative in $R$. Define a relation $\tau\subseteq (R \times D)\times (R\times D)$ by $$\big((r, d), (s, e)\big)\in\tau \iff re = ds.$$ Let $R[D^{-1}] := (R \times D)/\tau$. Denote by $\frac{r}{d}$ the equivalence class of $(r, d)$ in $R[D^{-1}]$. Now define addition and multiplication on $R[D^{-1}]$ by $$\dfrac{r}{d} + \dfrac{s}{e} := \dfrac{re + sd}{de}, \quad \dfrac{r}{d} \cdot \dfrac{s}{e} := \dfrac{rs}{de}.$$ This makes $(R[D^{-1}], +, \cdot)$ a commutative semiring. Its multiplicative identity is given by $1 := \frac{d}{d}$ for any $d\in D$. 
\end{construction}

From now on, we identify $R$ with its image in $R[D^{-1}]$. 

\begin{lemma}\label{properties_of_sets_after_localization}
    Let $R$ be a commutative, multiplicatively divisible, minimal non-idempotent semiring such that $N_R \neq \varnothing$. Let $\varnothing \neq D\subseteq C_R$ be a multiplicative subsemigroup and let $$C_{R, D} := \bigg\{\dfrac{c}{d} \Bigm\mid c \in C_R, d \in D\bigg\} \subseteq R[D^{-1}].$$ Then 
    \begin{enumerate}[label=(\arabic*)]
        \item $R[D^{-1}] = N_R \sqcup C_{R,D}$,
        \item for all $u\in U_R$, $e \in E_R$, $a \in N_R$ and $b\in C_{R, D}$, $$u + b \in U_R, \quad b + e = b, \quad ba = a.$$
    \end{enumerate} 
\end{lemma}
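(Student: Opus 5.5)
The plan is to compute directly with fractions $\frac{r}{d}$, using two facts throughout. First, since $R$ is not multiplicatively idempotent, Lemma~\ref{every_element_in_N_R_is_multiplicatively_idempotent} gives $C_R\neq\varnothing$. Together with $N_R\neq\varnothing$, Proposition~\ref{basic_properties_of_N_R_and_C_R}(1) then makes $N_R$ an ideal of $R$. Second, by Lemma~\ref{every_element_in_N_R_is_multiplicatively_idempotent}, $da=a$ for all $d\in D\subseteq C_R$ and $a\in N_R$.

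The key observation is that elements of $N_R$ are unaffected by localization. For $a\in N_R$ and $d\in D$, the pairs $(a,d)$ and $(a,1)$ are $\tau$-equivalent, because $a\cdot d = da = a = d\cdot a$. Hence $\frac{a}{d}=a$ under the identification of $R$ with its image, which is injective because elements of $D$ are cancellative.

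\textbf{Part (1).} Every element of $R[D^{-1}]$ has the form $\frac{r}{d}$ with $r\in R=N_R\sqcup C_R$.
\begin{itemize}
\item If $r\in N_R$, then $\frac{r}{d}=r\in N_R$ by the key observation.
\item If $r\in C_R$, then $\frac{r}{d}\in C_{R,D}$ by definition.
\end{itemize}
This gives $R[D^{-1}]=N_R\cup C_{R,D}$. For disjointness, suppose $\frac{c}{d}=a$ with $c\in C_R$ and $a\in N_R$. Then $c=da$, which lies in $N_R$ because $N_R$ is an ideal. This contradicts $c\in C_R$.

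\textbf{Part (2).} Write $b=\frac{c}{d}$ with $c\in C_R$ and $d\in D$.

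For the product, $ba=\frac{ca}{d}$. Lemma~\ref{every_element_in_N_R_is_multiplicatively_idempotent} gives $ca=a$, and the key observation gives $\frac{a}{d}=a$, so $ba=a$.

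For the sum with $e\in E_R$, compute
\[
b+e=\frac{c+de}{d}=\frac{c+e}{d}=\frac{c}{d}=b,
\]
using $de=e$ and then $c+e=c$.

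For the sum with $u\in U_R$, similarly $u+b=\frac{du+c}{d}=\frac{u+c}{d}$. By the definition of $U_R$, we have $u+c\in N_R$, so the key observation gives $\frac{u+c}{d}=u+c$. Finally, Corollary~\ref{basic_properties_of_U_R_and_E_R}(1) shows $R+U_R\subseteq U_R$, hence $u+c\in U_R$.

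\textbf{Main obstacle.} The computations are routine. The real point to check is the hypotheses of the earlier results: the present lemma does not assume $R$ is finitely generated, while Proposition~\ref{every_element_satisfies_one_of_two_alternatives} and Corollary~\ref{basic_properties_of_U_R_and_E_R} do. I will verify that their proofs of the facts used here never invoke finite generation. These facts are the two alternatives, hence $c+e=c$, and $R+U_R\subseteq U_R$. Their proofs rely only on Lemma~\ref{every_element_in_N_R_is_multiplicatively_idempotent} and the ideal property of $N_R$; finiteness enters only through Lemma~\ref{N_R_is_finite}, which part (3) of the corollary needs and we do not. If this check fails, I will instead reprove $c+e=c$ and $R+U_R\subseteq U_R$ inline by the same short arguments.
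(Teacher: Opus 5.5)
Your proof is correct and follows the paper's route: in (1) you show $\frac{a}{d}=a$ for $a\in N_R$ via $da=a$, and you get disjointness by forcing an element of $C_R$ into $N_R$ (the paper derives $a=cd$ where you derive $c=da$, which is the same argument), and in (2) you write out the fraction computations that the paper calls immediate from Lemma~\ref{every_element_in_N_R_is_multiplicatively_idempotent} and Corollary~\ref{basic_properties_of_U_R_and_E_R}. Your check that only parts (1)--(2) of that corollary are needed, and that their proofs never use finite generation, is correct and resolves a hypothesis mismatch the paper leaves implicit; the one cosmetic fix is that $R$ need not have an identity, so compare $(a,d)$ with $(ad',d')$ for some $d'\in D$ rather than with $(a,1)$.
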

\begin{proof}
    $(1)$ Let $a \in N_R$ and $d \in D$. Since $d \in C_R$, Lemma \ref{every_element_in_N_R_is_multiplicatively_idempotent} gives $ad = a = ad^2$. Hence we obtain $$\dfrac{a}{d} = \dfrac{ad}{d} = a$$ in $R[D^{-1}]$. Therefore, every element of $R[D^{-1}]$ is either an element of $N_R$ or belongs to $C_{R,D}$. Now suppose that $a = \frac{c}{d}$ for some $c\in C_R$. Then $ad^2 = cd$ and in consequence $a = cd$. But $a\in N_R$, $cd \in C_R$ and $N_R \cap C_R = \varnothing$, a contradiction. Thus, $N_R$ and $C_{R,D}$ are disjoint. \\
    $(2)$ It follows immediately from Lemma \ref{every_element_in_N_R_is_multiplicatively_idempotent} and Corollary \ref{basic_properties_of_U_R_and_E_R}. 
\end{proof}

\begin{lemma}\label{form_of_elements_in_C_T}
    Let $R$ be a finitely generated, commutative, multiplicatively divisible, minimal non-idempotent semiring. Let $X$ be a finite generating set of $R$. Then every element $b \in C_R$ can be written as a finite sum $$b = p_1 + \ldots + p_r,$$ where each $p_i$ is a non-empty finite product of elements of $X \cap C_R$.
\end{lemma}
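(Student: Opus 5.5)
Since $X$ generates $R$, every element of $R$ is a finite sum $b=p_1+\ldots+p_r$ of non-empty finite products $p_i$ of elements of $X$. The plan is to fix such an expression for $b\in C_R$ and then remove every summand involving a generator from $N_R$. We first split the summands of $b$ into two groups. Since $N_R$ is an ideal (Proposition~\ref{basic_properties_of_N_R_and_C_R}(1); note $C_R\neq\varnothing$ by Lemma~\ref{every_element_in_N_R_is_multiplicatively_idempotent}), a product $p_i$ containing at least one factor from $X\cap N_R$ lies in $N_R$. Conversely, a product of elements of $X\cap C_R$ lies in $C_R$, because $C_R$ is closed under multiplication. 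So we may write $b=s+n$, where $s$ is the sum of the summands built only from $X\cap C_R$ and $n$ is the sum of the remaining summands; either part may be empty. If the second part is empty, we are done.

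The first case is $s$ empty. Then $b=n$ is a sum of elements of $N_R$, and $N_R$ is closed under addition, so $b\in N_R$. This contradicts $b\in C_R$, so $s$ is non-empty.

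In the remaining case $s\in C_R$ and $n\in N_R$, and we must show $s+n=s$. By Proposition~\ref{every_element_satisfies_one_of_two_alternatives} applied to $a=n$, there are two possibilities.
\begin{enumerate}[label=(\roman*)]
\item $C_R+n=\{c\}$ with $c\in N_R$. Then $b=s+n=c\in N_R$, which is impossible.
\item $b'+n=b'$ for every $b'\in C_R$. In particular $s+n=s$, so $b=s$ is a sum of products of elements of $X\cap C_R$, as required.
\end{enumerate}
Equivalently, in the language of Corollary~\ref{basic_properties_of_U_R_and_E_R}: $n\in U_R$ would force $b\in N_R$, so $n\in E_R$ and part (2) of that corollary gives $s+n=s$.

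There is no real obstacle. The only care needed is that each summand has a definite type (in $N_R$ or in $C_R$) and that the $N_R$-summands can be grouped into a single element $n$; both follow because $N_R$ is an ideal closed under addition and $C_R$ is multiplicatively closed.
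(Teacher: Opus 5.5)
Your argument follows the paper's route. You classify the summands using that $N_R$ is an ideal, rule out the case where all summands lie in $N_R$, and absorb the $N_R$-part. The paper does the last step by excluding $U_R$-summands with Corollary~\ref{basic_properties_of_U_R_and_E_R}(1) and deleting $E_R$-summands with Corollary~\ref{basic_properties_of_U_R_and_E_R}(2). You instead apply Proposition~\ref{every_element_satisfies_one_of_two_alternatives} to the grouped element $n$.

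One step is asserted without justification. When both parts are non-empty you write ``$s\in C_R$'', but $s$ is a \emph{sum} of elements of $C_R$. Nothing shows that $C_R$ is closed under addition; indeed, the proof of Theorem~\ref{solution_to_conjecture_2} has to treat cases such as $C_T+C_T\subseteq U_T$. Both of your cases (i) and (ii) depend on $s\in C_R$.

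The fix is one line. Since $n\in N_R$ and $N_R$ is closed under addition, $s\in N_R$ would give $b=s+n\in N_R$, contradicting $b\in C_R$. Hence $s\in C_R$, and with this addition your proof is complete. Alternatively, you can let a single summand $p_j\in C_R$ absorb the $N_R$-summands one at a time, which is how the paper's deletion step reads. Then no additive closure of $C_R$ is ever needed.
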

\begin{proof}
    Let $b\in C_R$. The element $b$ may be written as a finite sum 
    \begin{equation}
        b = p_1 + \ldots + p_r,   
    \end{equation}
    where each $p_i$ is a non-empty finite product of elements of $X$. If all products $p_1, \ldots, p_r$ belong to $E_R$, then $b \in E_R\subseteq N_R$, because $E_R$ is closed under addition. This is a contradiction with $b \in C_R$. Take $k \in \{1, \ldots, r\}$ and consider a product $p_k$. Suppose that at least one of its factors belongs to $N_R$. By Proposition \ref{basic_properties_of_N_R_and_C_R}(1), $N_R$ is an ideal of $R$, so $p_k\in N_R$. If $p_k\in U_R$, then by Corollary \ref{basic_properties_of_U_R_and_E_R}(1), $b \in U_R\subseteq N_R$. This is again a contradiction. Thus, all of the products $p_1, \ldots, p_r$ belong to $C_R \cup E_R$ and at least one of these products belongs to $C_R$. By Corollary \ref{basic_properties_of_U_R_and_E_R}(2), all products lying in $E_R$ can be deleted. The remaining expression is a finite sum of finite products of elements in $X \cap C_R$, as required. 
\end{proof}

\begin{theorem}\cite[Conjecture 2]{KepkaKorbelarLandsmann}\label{solution_to_conjecture_2}
    Let $R$ be a finitely generated commutative semiring. If $R$ is multiplicatively divisible, then $R$ is multiplicatively idempotent. 
\end{theorem}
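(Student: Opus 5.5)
We argue by contradiction. Suppose $R$ is finitely generated, commutative and multiplicatively divisible, but not multiplicatively idempotent. By Lemma~\ref{existence_of_minimal_non_idempotent_quotient}, $R$ has a quotient $T$ that is finitely generated, commutative and minimal non-idempotent. Since a quotient of a multiplicatively divisible semiring is again multiplicatively divisible, all the results of Section~\ref{section_multiplicatively_divisible_commutative_semirings} apply to $T$. Fix a non-idempotent element $u\in T$. By Lemma~\ref{every_element_in_N_R_is_multiplicatively_idempotent}, $u\in C_T$, so $C_T\neq\varnothing$.

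\emph{Case $N_T=\varnothing$.} Then $T$ is multiplicatively cancellative. Proposition~\ref{finitely_generated_commutative_multiplicatively_divisible_multiplicatively_cancellative_semiring_is_trivial} shows $T$ is trivial, hence idempotent, which is a contradiction. So we may assume $N_T\neq\varnothing$.

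\emph{Case $N_T\neq\varnothing$.} The plan is to isolate $C_T$ as a finitely generated, multiplicatively divisible, multiplicatively cancellative semiring. Let $X$ be a finite generating set of $T$. By Lemma~\ref{form_of_elements_in_C_T}, every element of $C_T$ is a sum of products of elements of $X\cap C_T$. Moreover, $(C_T,\cdot)$ is divisible by Proposition~\ref{basic_properties_of_N_R_and_C_R}(2), and it is cancellative by definition. Hence if $C_T$ is closed under addition, then $C_T$ is exactly the subsemiring generated by the finite set $X\cap C_T$. It is then finitely generated, commutative, multiplicatively divisible and multiplicatively cancellative, so it is trivial by Proposition~\ref{finitely_generated_commutative_multiplicatively_divisible_multiplicatively_cancellative_semiring_is_trivial}. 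Consequently $u=u^2$, a contradiction.

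\emph{Main obstacle: showing $C_T+C_T\subseteq C_T$.} Suppose $b,b'\in C_T$ and $a:=b+b'\in N_T$. By Lemma~\ref{every_element_in_N_R_is_multiplicatively_idempotent}, $a=a^2$ and $ca=a$ for all $c\in C_T$.

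\emph{Sub-case $a\in E_T$.} Corollary~\ref{basic_properties_of_U_R_and_E_R}(2) gives $b+a=b$. Then
\[
a=ba=b(b+b')=b^2+bb',
\]
and similarly $a=bb'+b'^2$. I would try to combine these equalities with cancellation by elements of $C_T$ (for instance multiplying by suitable elements of $C_T$ and cancelling) to force an element of $N_T$ to equal an element of $C_T$, which is impossible.

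\emph{Sub-case $a\in U_T$.} Here I would pass to the localization $T[D^{-1}]$ with $D:=C_T$. By Lemma~\ref{properties_of_sets_after_localization}, $T[D^{-1}]=N_T\sqcup C_{T,D}$, and $C_{T,D}$ is a multiplicative abelian group. Since $R+U_T\subseteq U_T$ (Corollary~\ref{basic_properties_of_U_R_and_E_R}(1)) and $U_T$ is stable under multiplication by $C_{T,D}$, I would then consider the Rees-type relation on $T$ that collapses $U_T$ to a single point. The aim is to show this relation is a congruence; it is proper whenever $|U_T|\geqslant2$.
\begin{itemize}
\item If $|U_T|\geqslant2$, minimality of $T$ gives that the quotient is idempotent, so $u$ and $u^2$ become identified. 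But $u,u^2\in C_T$ are not collapsed by this relation, so $u=u^2$, a contradiction.
\item If $|U_T|=1$, then $U_T=\{\infty\}$ is an additively absorbing element.
\end{itemize}
In the latter situation, we would run the argument of Proposition~\ref{finitely_generated_commutative_multiplicatively_divisible_multiplicatively_cancellative_semiring_is_trivial} on $C_T\cup\{\infty\}$, treating $\infty$ as a formal ``undefined sum''. The parasemifield/$\ell$-group part should survive, since the relation $\alpha$ there already tolerates such degeneracies.

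I expect this additive-closure step, and in particular the verification that collapsing $U_T$ gives a congruence, to be the hardest part of the proof. The first thing I would try is to check directly that $U_T\cdot T\subseteq U_T$ using Corollary~\ref{basic_properties_of_U_R_and_E_R}(3).
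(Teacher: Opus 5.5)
Your reduction to a minimal non-idempotent quotient $T$, the case $N_T=\varnothing$, and the case $C_T+C_T\subseteq C_T$ coincide with the paper. However, the two sub-cases in which some $b+b'$ lands in $N_T$ carry all of the difficulty, and you leave them as plans that do not work as stated.

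\textbf{The sub-case $b+b'=a\in E_T$.} No combination of $a=a^2$, $ca=a$, $c+a=c$ with cancellation by elements of $C_T$ can give a contradiction. The algebraic closure $\overline{\mathbb{F}_3}$ is commutative, multiplicatively divisible and minimal non-idempotent, since its semiring congruences are its ideals. It has $N=E=\{0\}$ and $1+2=0$, and it satisfies all of these identities. Hence finite generation must enter essentially here.

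The paper localizes at $D=\{b,b^2,\dots\}$. Then $1+\frac{b'}{b}=a$, so $c+c\frac{b'}{b}=a$ for every $c\in C_{T,D}$. One checks that $C_{T,D}\cup\{a\}$ is a commutative ring with zero $a$. It is finitely generated by $(X\cap C_T)\cup\{\frac1b,a\}$ and is multiplicatively divisible. Proposition~\ref{finitely_generated_commutative_multiplicatively_divisible_additively_cancellative_semiring_is_multiplicatively_idempotent}, which rests on residual finiteness of finitely generated commutative rings, then makes $C_T$ idempotent.

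\textbf{The sub-case $a\in U_T$.} Collapsing $U_T$ is a congruence only if you control the products $un$ with $u\in U_T$ and $n\in N_T$; essentially you need $N_TU_T\subseteq U_T$. Corollary~\ref{basic_properties_of_U_R_and_E_R}(3) says $E_TE_T\subseteq E_T$, which does not give this. The localization at $D=C_T$ that you introduce is never used.

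More seriously, your fallback for $|U_T|=1$ (``the $\ell$-group part should survive'') is not an argument. That machinery also degenerates once an additively absorbing $\infty$ is present. The relation $\alpha$ becomes universal for trivial reasons, and the witness $c$ it produces may satisfy $1+c=\infty$. So the fixed-point set $\{x: 1+x=x\}$ need not meet $G(C_T)$. When $C_T+C_T\subseteq U_T=\{\infty\}$, there is no additive information left at all.

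The missing ideas are the paper's two arguments:
\begin{enumerate}[label=(\roman*)]
\item If $C_T+C_T\subseteq U_T$, then Lemma~\ref{form_of_elements_in_C_T} and $T+U_T\subseteq U_T$ show that each element of $C_T$ is a single product over $X\cap C_T$. So $(C_T,\cdot)$ is a finitely generated divisible commutative semigroup, hence a finite band by Remark~\ref{finite_divisible_semigroup_is_a_finite_band}.
\item In the mixed case, the paper sets $P=\{\frac pq\in G(C_T): p+q\in U_T\}$. This set is non-empty, and it is proper because no fraction admits both kinds of representation. Its stabilizer $K$ contains $\frac wx$ and $\frac wy$ whenever $x+y=w$ in $C_T$. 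Via partial sums, the image of $C_T$ in $G(C_T)/K$ is generated by the images of $X\cap C_T$, so it is trivial by the same remark. Hence $K=G(C_T)$, so $P=G(C_T)$, a contradiction.
\end{enumerate}
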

\begin{proof}
    Suppose, for the sake of contradiction, that the semiring $R$ is multiplicatively divisible and not multiplicatively idempotent. By Lemma \ref{existence_of_minimal_non_idempotent_quotient}, $R$ has a quotient $T$, which is finitely generated, commutative and minimal non-idempotent. Moreover, multiplicative divisibility of $R$ implies multiplicative divisibility of $T$. If $N_T = \varnothing$, then every multiplication map $m_t:T\to T$ is injective and hence $T$ is multiplicatively cancellative. Then by Proposition \ref{finitely_generated_commutative_multiplicatively_divisible_multiplicatively_cancellative_semiring_is_trivial}, $T$ is trivial and in consequence multiplicatively idempotent, a contradiction. Otherwise, $N_T \neq \varnothing$ and hence by Lemma \ref{every_element_in_N_R_is_multiplicatively_idempotent}, $C_T$ contains a non-idempotent element. 

    Fix a finite generating set $X$ of $T$. Assume that $C_T + C_T \subseteq U_T$. Let $b \in C_T$. By Lemma \ref{form_of_elements_in_C_T}, we may write $$b = p_1 + \ldots + p_r,$$ where each $p_i$ is a finite product of elements of $X \cap C_T$. By Proposition \ref{basic_properties_of_N_R_and_C_R}(2), $C_T$ is a divisible commutative semigroup. Hence $p_i\in C_T$ for all $i = 1, \ldots, r$. We claim that $r = 1$. Indeed, if $r \geqslant 2$, then $p_1 + p_2\in U_T$. By Corollary \ref{basic_properties_of_U_R_and_E_R}(1), $T + U_T \subseteq U_T$ and hence $p_1 + \ldots + p_r\in U_T$, contradicting $b \in C_T$. Therefore, every element of $C_T$ is a product of elements of the finite set $X \cap C_T$. Consequently, the divisible commutative semigroup $C_T$ is finitely generated. Thus, by Remark \ref{finite_divisible_semigroup_is_a_finite_band}, the semigroup $C_T$ is a finite band. Hence every element of $C_T$ is multiplicatively idempotent. Since every element of $N_T$ is multiplicatively idempotent by Lemma \ref{every_element_in_N_R_is_multiplicatively_idempotent}, all elements of $T = N_T \cup C_T$ are multiplicatively idempotent, which leads to a contradiction. 

    It remains to handle the case $C_T + C_T \not\subseteq U_T$. First suppose that $$(C_T + C_T)~\cap~E_T~\neq~\varnothing.$$ Then there exist $b, d\in C_T$ such that $b + d = e\in E_T$. Put $D := \{b, b^2, b^3, \ldots\}\subseteq C_T$ and apply Construction \ref{construction_localization_for_commutative_semiring} to obtain a commutative semiring $T[D^{-1}]$. Consider a set $$C_{T, D} := \bigg\{\dfrac{c}{b^n} \Bigm\mid c \in C_T, n \geqslant 1\bigg\} \subseteq T[D^{-1}].$$ Then we have $$1 + \dfrac{d}{b} = \dfrac{b}{b} + \dfrac{d}{b} = \dfrac{b^2 + db}{b^2} = \dfrac{b(b + d)}{b^2} = \dfrac{be}{b^2} = e$$ and in consequence \begin{equation}\label{some_helpful_equality_after_localization}
        c + c\cdot \dfrac{d}{b} = c\bigg(1 + \dfrac{d}{b}\bigg) = ce = e
    \end{equation}
    for every $c \in C_{T, D}$, where the last equality follows from Lemma \ref{properties_of_sets_after_localization}(2). We claim that $P := C_{T, D}\cup\{e\}$ is a commutative ring. Let $c, g \in C_{T, D}$. By Lemma \ref{properties_of_sets_after_localization}(1), we have $c + g \in C_{T, D} \sqcup U_T \sqcup E_T$. If $c + g\in U_T$, then $(c + g) + c\cdot \frac{d}{b}\in U_T$. On the other hand, using \eqref{some_helpful_equality_after_localization}, $$(c + g) + c\cdot\dfrac{d}{b} = g + \bigg(c + c\cdot \dfrac{d}{b}\bigg) = g + e = g \in C_{T, D},$$ which leads to a contradiction. If $c + g \in E_T$, then the same computation gives $(c + g) + c\cdot\frac{d}{b} = g \in C_{T, D}$. But by Lemma \ref{properties_of_sets_after_localization}(2), we have $(c + g) + c\cdot\frac{d}{b} = c\cdot\frac{d}{b}$. Hence $g = c\cdot\frac{d}{b}$ and then using \eqref{some_helpful_equality_after_localization}, $$c + g = c + c\cdot\frac{d}{b} = e.$$ We conclude that $C_{T, D} + C_{T, D} \subseteq C_{T, D} \cup \{e\} = P$. The element $e$ is an additive identity of $P$ and by Corollary \ref{basic_properties_of_U_R_and_E_R}(3), it is additively idempotent. Moreover, by \eqref{some_helpful_equality_after_localization} every element of $P$ has an additive inverse. It can be easily verified that $P \cdot P \subseteq P$, so $P$ is a commutative ring. 

    By Lemma \ref{form_of_elements_in_C_T}, every element of $C_T$ is a finite sum of products of elements of $X \cap C_T$. Hence $P$ is finitely generated as a semiring by $(X \cap C_T) \cup\{\frac{1}{b}, e\}.$ It is also multiplicatively divisible. Indeed, let $\frac{c}{b^m} \in C_{T, D}$ and let $n \geqslant 1$. Since $T$ is multiplicatively divisible, there exists an element $t \in T$ such that $$t^n = cb^{m(n - 1)}.$$ The right-hand side belongs to $C_T$, so also $t^n\in C_T$. If $t\in N_T$, then $t^n\in N_T$, a contradiction. Hence $t \in C_T$, and therefore $\frac{t}{b^m}\in C_{T, D}$. Moreover, $$\bigg(\dfrac{t}{b^m}\bigg)^n = \dfrac{c}{b^m}.$$ Since $e^2 = e$, we have $e^n = e$. Thus, $P$ is a finitely generated, commutative, additively cancellative semiring which is multiplicatively divisible. By Proposition \ref{finitely_generated_commutative_multiplicatively_divisible_additively_cancellative_semiring_is_multiplicatively_idempotent}, $P$ is multiplicatively idempotent. In particular, every element of $C_T \subseteq P$ is multiplicatively idempotent. By Lemma \ref{every_element_in_N_R_is_multiplicatively_idempotent}, every element of $N_T$ is also multiplicatively idempotent. Hence $T = N_T \cup C_T$ is multiplicatively idempotent, which leads to a contradiction. 

    Now suppose that $$(C_T + C_T) \cap E_T = \varnothing.$$ Consequently, we have $C_T + C_T \subseteq C_T \cup U_T$. By assumption $C_T + C_T \not\subseteq U_T$, so there exist $x, y \in C_T$ such that $x + y \in C_T$. If $C_T + C_T \subseteq C_T$, then $C_T$ is a subsemiring of $T$. By Lemma \ref{form_of_elements_in_C_T}, it is finitely generated as a semiring. It is multiplicatively divisible by Proposition \ref{basic_properties_of_N_R_and_C_R}(2), and multiplicatively cancellative by definition of $C_T$. Hence Proposition \ref{finitely_generated_commutative_multiplicatively_divisible_multiplicatively_cancellative_semiring_is_trivial} implies that $C_T$ is trivial. This contradicts the fact established above that $C_T$ contains a non-idempotent element. Thus, it remains to handle the case in which both intersections $(C_T + C_T)\cap C_T$ and $(C_T + C_T)\cap U_T$ are non-empty. Since $(C_T, \cdot)$ is a cancellative commutative semigroup, we apply Construction \ref{construction_of_completion_group} to obtain the abelian group $(G(C_T), \cdot)$. Define $$P := \bigg\{\dfrac{p}{q}\in G(C_T) \Bigm\mid p, q \in C_T, \ p + q \in U_T\bigg\}.$$ Then $P \neq \varnothing$. We first observe that no element of $G(C_T)$ can be represented both in the form $$\dfrac{p}{q},\qquad p,q\in C_T, \quad p + q \in U_T,$$ and in the form $$\dfrac{b}{d}, \qquad b, d \in C_T, \quad b + d \in C_T.$$ Indeed, suppose that $\frac{p}{q} = \frac{b}{d}$. Then $bq = pd$ and in consequence \begin{equation}\label{equality_leading_to_contradiction}
        b(p + q) = bp + bq = bp + pd = p(b + d).
    \end{equation}
    Since $p + q \in U_T$ and $b\in C_T$, we have $b(p + q) = p + q\in U_T$ by Lemma \ref{every_element_in_N_R_is_multiplicatively_idempotent}. Therefore, the left-hand side of \eqref{equality_leading_to_contradiction} belongs to $U_T$, whereas the right-hand side belongs to $C_T$, a contradiction. Now, since $(C_T + C_T)\cap C_T \neq \varnothing$, there exist $b, d \in C_T$ such that $b + d \in C_T$. By the observation above, we get $\frac{b}{d}\not\in P$ and hence $P$ is a proper subset of $G(C_T)$. 

    Put $K := \{x \in G(C_T) \mid Px = P\}$. Then $K$ is a subgroup of $G(C_T)$. We claim that if $x, y, w \in C_T$ and $x + y = w$, then $\frac{w}{x}, \frac{w}{y}\in K$. Take elements $p, q\in C_T$ such that $p + q \in U_T$. Then we have $$(p + q)x + qy = px + qx + qy = px + q(x + y) = px + qw.$$ By Lemma \ref{every_element_in_N_R_is_multiplicatively_idempotent}, $(p + q)x \in U_T$ and hence by Corollary \ref{basic_properties_of_U_R_and_E_R}(1), the left-hand side belongs to $U_T$. Then $px + qw$ also belongs to $U_T$ and in consequence $\frac{p}{q}\cdot \frac{x}{w}=~\frac{px}{qw}~\in~P$. Hence $P \cdot \frac{x}{w} \subseteq P$. Similarly, we show that $P\cdot \frac{w}{y} \subseteq P$. Interchanging $x$ and $y$ gives the reverse inclusions, so $\frac{w}{x}, \frac{w}{y} \in K$, as required. 

    We now prove that $K = G(C_T)$. Let $a \in C_T$. By Lemma \ref{form_of_elements_in_C_T}, we have $a = p_1 + \ldots + p_r$, where each $p_i$ is a product of elements of $X \cap C_T$. Consider the partial sums $s_1 = p_1$ and $s_{i+1} = s_i + p_{i + 1}$ for all $i = 1, \ldots, r - 1$. Since $a \in C_T$ and $T + U_T\subseteq U_T$, no partial sum can belong to $U_T$. Since $C_T + C_T \subseteq C_T \cup U_T$, all partial sums belong to $C_T$. Then $\frac{s_{i+1}}{s_i}\in K$ for all $i = 1, \ldots, r - 1$ and in consequence $s_{i+1}K = s_iK$. We obtain $$aK = s_rK = s_{r - 1}K = \ldots = s_1K = p_1K.$$ Hence the image of $C_T$ in $G(C_T)/K$ is generated, as a multiplicative semigroup, by the images of the elements of $X \cap C_T$. Therefore, it is a finitely generated divisible commutative semigroup. By Remark \ref{finite_divisible_semigroup_is_a_finite_band}(1), the image of $C_T$ in $G(C_T)/K$ is a finite band. But $G(C_T)/K$ is a group and hence the image of $C_T$ is trivial in $G(C_T)/K$. Thus, $C_T\subseteq K$ and in consequence $K = G(C_T)$, as required. But then $PG(C_T) = P$. Since $P$ is non-empty, $PG(C_T) = G(C_T)$ and hence $P = G(C_T)$, contradicting the fact that $P$ is a proper subset of $G(C_T)$. This contradiction eliminates the final remaining case. Hence no finitely generated, commutative, multiplicatively divisible, minimal non-idempotent semiring $T$ exists. This contradicts the choice of $T$ and proves that $R$ is multiplicatively idempotent. 
\end{proof}
\bibliographystyle{plain}
\bibliography{references}
\end{document}